\documentclass[11pt]{article}
\usepackage{graphicx}

\usepackage{amssymb,latexsym,amsmath,enumerate,verbatim,amsfonts,amsthm,cite,lscape,algorithm,algorithmic}
\usepackage{color} % added by TK
\usepackage[active]{srcltx}

\textwidth 15.0cm \textheight 22.5cm \oddsidemargin 0.1 cm
\evensidemargin 0.1 cm \topmargin -0.3 cm

\newtheorem{definition}{Definition}[section]

\newtheorem{theorem}{Theorem}[section]

\newtheorem{proposition}{Proposition}[section]
\newtheorem{remark}{Remark}[section]
\newtheorem{example}{Example}[section]

% def by Ting Kei Pong

\def\R{{\mathbb{R}}}

% def by Guoyin

\def\hat{\widehat}

\def\tilde{\widetilde}

\def\tto{\;{\lower 1pt \hbox{$\rightarrow$}}\kern -10pt
\hbox{\raise 2pt \hbox{$\rightarrow$}}\;}

\begin{document}

% REQUIRED

\title{\sf Convexifiability of  Continuous  and Discrete Nonnegative Quadratic Programs for Gap-Free Duality\thanks{ Research was partially supported by a grant from the Australian Research Council. }}

\date{\today}

\author{N. H. Chieu \thanks{Institute of Natural Sciences Education, Vinh University, Nghe An, Vietnam. Email: {chieunh@vinhuni.edu.vn}. Work of this author was carried out while he was at the University of New South Wales, Sydney, Australia.}
\and V. Jeyakumar\thanks{Department of Applied
Mathematics, University of New South Wales, Sydney 2052, Australia.
Email: {v.jeyakumar@unsw.edu.au}}
\and G. Li\thanks{Department of Applied
Mathematics, University of New South Wales, Sydney 2052, Australia. Email: {g.li@unsw.edu.au}. Date: September 19, 2018.}
 }

%
% Dianne Doe\thanks{Imagination Corp., Chicago, IL
% (\email{ddoe@imag.com}).}
% \and Paul T. Frank\thanks{Department of Applied Math, Fictional
% University, Boise, ID (\email{ptfrank@fictional.edu},
% \email{jesmith@fictional.edu}).}
% \and Jane E. Smith\footnotemark[3]}
%
%
% \author{, \  \ }
\maketitle
\begin{abstract}
%  In this paper, we examine exact copositive relaxations for nonconvex quadratic optimization problem with multiple quadratic constraints and nonnegative variables.
% Convexity underpins many important developments of mathematical theory and methods of optimization. When it comes to studying hard nonconvex optimization problems, one of the most desirable properties of these problems is undoubtedly convexifiability. The convexifiable programs often admit equivalent convex conic programming reformulations or exact convex progamming relaxations or gap-free Lagrangian-type dual problems under suitable conditions and consequently allow hierarchies of numerically tractable approximations for solving these problems.
% In this paper we examine nonnegative quadratically constrained quadratic programs and show that a key convexifiability property of these programs guarantees an exact copositive relaxation.
In this paper we show that a convexifiability property of nonconvex quadratic programs with nonnegative variables and quadratic constraints guarantees zero duality gap between the quadratic programs and their semi-Lagrangian duals. More importantly, we establish that this convexifiability is hidden in classes of nonnegative homogeneous quadratic programs and discrete quadratic programs, such as mixed integer quadratic programs,  revealing zero duality gaps. As an application, we prove that  robust counterparts of uncertain mixed integer quadratic programs with objective data uncertainty enjoy zero duality gaps under suitable conditions. Various sufficient conditions for convexifiability are also given.

% We then establish that, under mild assumptions, many classes of nonnegative quadratic programs admit the required convexifiability or exact copositive relaxations, including the classes of quadratic programs with mixed integer variables, robust counterpart of mixed integer quadratic programs in the face of objective data uncertainty, homogeneous programs with a single constraint,  and uniform quadratic programs with quadratic constraints.
%We also establish that uniform quadratic programs with nonnegative and quadratic constraints admit convexifiability under suitable conditions and  as an application we provide a copositive reformulation for the problem of finding the smallest radius ball  enclosing a given nonnegative intersection of balls. %We provide various examples illustrating our results.

\medskip

{\bf Keywords:} Quadratic optimization, zero duality gaps, global optimization, mixed integer quadratic programs, duality.
\medskip

{\bf AMS Classfication:}
 90C20, 90C26, 90C31
%\end{AMS}
\end{abstract}
\section{Introduction}

%The convexifiable programs often permit equivalent convex conic programming reformulations or exact convex programming relaxations under suitable conditions and consequently may allow hierarchies of numerically tractable approximations for solving these problems.

% Recently, equivalence of semi-Lagrangian dual of nonconvex quadratic programs with quadratic and linear constraints to natural copositive relaxation has been given  in Bomze~\cite{B15}  and completely positive relaxations for more general quadratic optimization problems with conic constraints have also been presented in \cite{BMP16}.

In this paper, we examine the quadratically constrained quadratic optimization problems (QPs) with nonnegative variables of the form
$$(P_1) \quad \begin{array}{rl} &\displaystyle\inf_{x\in \R^n} x^TAx+b^Tx+c\\
&\  \mbox{s.t.}\quad   x^TA_ix+b_i^Tx+c_i \leq 0,\,  i=0,1,...,m,\,  x_j \ge 0, j=1,2,\ldots,n,\end{array}$$
%\begin{equation*} \begin{array}{rl} &&(P) \quad \inf & x^TAx+b^Tx+c \\
%&&\mbox{s.t.} & x^TA_ix+b_i^Tx+c_i \leq 0,\,  i=0,1,...,m,\, \\
%&&&x_i\ge 0, i=1,2, \ldots, n,\end{array}
%\end{equation*}
where $A$,  $A_i$ are $(n\times n)$ symmetric matrices and $b,b_i\in \R^n,$ $c, c_i\in \R,$ $i=0,1,...,m$. The model problems of the form $(P)$ appear in broad areas of commerce, science and engineering where optimization is used. In particular, many classes of mixed integer quadratic programs and robust quadratic programs, such as the deterministic models of quadratic programs under data uncertainty, that arise frequently in real-world applications, can equivalently be reformulated as quadratic programs  of the form $(P_1)$.

The semi-Lagrangian dual of $(P_1)$ is given as (see Bomze~\cite{B15})
\begin{equation}\label{eq:D_1} (D_1) \quad \sup \limits_{u} \Theta(u)\quad \mbox{s.t.}\ \,  u\in \R^{m+1}_+,\end{equation}
where  $\Theta(u)$ is given by $\Theta(u):=\inf\limits_{x\in \R^n_+}L(x,u)$ with $L(x,u):=f(x)+\sum\limits_{i=0}^mu_ig_i(x)$ and  $f(x)=x^TAx+b^Tx+c,$  $g_i(x)=x^TA_ix+b_i^Tx+c_i,$ $i=0,1,...,m.$ It follows from the construction of $(D_1)$ that \begin{equation}\label{eq13bs}
\inf (P_1)\geq  \sup (D_1).\end{equation}

% Further, its dual  is a completely positive relaxation having  the dimension and the number of constraints less than the corresponding ones of the completely positive relaxation given in \cite{BMP16}.
%Especially, the copositive program  relaxation approaches proved to be powerful tools for examining and solving  classes of challenging quadratic optimization problems \cite{B12, B15,  D10}.
%We refer the readers to Bomze \cite{B12},  Burer \cite{Bu15} and D${\rm\ddot{u}}$r \cite{D10} for comprehensive surveys of copositive optimization and its applications.

The problem $(P_1)$ is said to admit {\bf zero duality gap}  whenever the optimal values of $(P_1)$ and its
semi-Lagrangian dual  problem $(D_1)$ are equal, i.e. $\inf (P_1) =  \sup (D_1)$. Unfortunately, zero duality gap between problems $(P_1)$ and $(D_1)$ does not always hold (see Example~\ref{e-inexact}). Some sufficient conditions for strong duality, i.e. $\inf (P_1)= \max (D_1)$, between the problems $(P_1)$ and $(D_1)$ have been given using
a generalized Karush-Kuhn-Tucker condition and copositivity of the related slack matrix \cite[Theorem 5.1]{B15}.  The semi-Lagrangian dual of nonconvex quadratic programs with quadratic and linear constraints is known to provide, in general, a better bound comparing to the standard Lagrangian dual and it
admits a natural copositive program reformulation \cite{B15}.
%A copositive programming problem is a conic linear program where a linear function is minimized over the  cone of copositive matrices   s.t.  linear constraints \cite{B12, B15, BDKR00}
%and its dual is known as a completely positive program.
Copositive programs have been extensively studied in the framework of relaxation schemes for solving  optimization problems in \cite{BMP16, BJL17,  Bu09, Bu12, Bu15, D10, QDRT}.

It is widely known that convexity of sets and functions of optimization problems underpins many important developments of mathematical theory and methods of optimization. For instance, recent research (see \cite{BJL17,jl14,jl16}) has examined the role of convexity in duality and exact conic programming relaxations for special classes of quadratic programs such as extended trust-region problems, CDT problems (two-balls trust-region problems) and separable minimax quadratic programs. When it comes to studying duality for hard nonconvex quadratic programs, such as  general nonnegative quadratic programs with quadratic constraints and mixed integer quadratic programs, identifying the key features that underline the zero duality gap property and then finding classes of quadratic programs that possess the features and zero duality gaps are undoubtedly important.

In this paper we show that a convexifiability property (see Definition 2.1) of general nonconvex quadratic programs with nonnegative variables guarantees zero duality gap between
quadratic programs with nonnegative variables and its semi-Lagrangian dual. More importantly, we establish that this convexifiability is hidden in classes of nonnegative homogeneous quadratic programs and discrete quadratic programs, such as mixed integer quadratic programs,  revealing zero duality gaps. More specifically, our main contributions include the following:
\begin{itemize}
\item[(i)] By introducing the idea of convexifiability for $(P_1)$, we first establish that zero duality gap holds between $(P_1)$ and its semi-Lagrangian dual $(D_1)$, whenever the problem $(P_1)$ is convexifiable. In particular, we show that a nonconvex homogeneous quadratic optimization problem with a single strictly copositive quadratic constraint and nonnegative variables enjoys convexifiability and consequently zero duality gap.
%In the case where the problem $(P_1)$ attains its infimum, we show that our geometric condition guarantees related known conditions for exact copositive relaxation.

\item[(ii)] We also prove that quadratic programming problems with mixed integer variables admits hidden convexifiablity in the sense that its equivalent continuous quadratic program reformulation is convexfiable,
under mild assumptions.
Consequently, we obtain zero duality gap between a mixed integer quadratic program and its semi-Lagrangian dual, recovering and complementing the important copositive representation result given recently in \cite{Bu09}.

\item[(iii)] As an important application, we then establish that zero duality gap holds for a class of
%
%single as well as multi-stage
robust mixed integer quadratic programs with objective data uncertainty under suitable conditions. Robust optimization approach, which treats continuous  optimization problems with parameters of unknown but fixed value, is now relatively well understood (see \cite{robust_book, Bersimas,cjl,goberna,jeya-li-robust1}). Extending the robust optimization techniques to an optimization problem with mixed integer constraints is increasingly becoming a cutting-edge research area in optimization under data uncertainty  %It applies to
    %dynamic decision-making problems, where the decision makers are able to adjust their strategies to information revealed over time
    (see \cite{robust_book,Bersimas,Teo} and other references therein).
   % Our results show that existing conic optimization techniques may be utilized to analyze and approximate the challenging robust mixed-integer quadratic optimization problems. These results also potentially lead to  hierarchies of tractable relaxations \cite{B15,BJL17} for solving robust mixed-integer quadratic optimization problems.

% \item[(iv)] We  finally provide conditions under which nonnegative uniform quadratic  optimization problems, where the Hessian of a quadratic constraint function is a scalar multiple of the Hessian of the objective function, enjoys convexifiability.
%  While exact semi-definite relaxations for uniform quadratic optimization problems have recently been investigated in the literature \cite{jl14,jl16,B07}, the presence of non-negativity constraint makes the uniform quadratic optimization problem, in general, NP-hard and violates the standard requirements, such as the dimension conditions of \cite{jl14,jl16},  for exact conic relaxations.
  %As an application, we present a copositive reformulation for the problem of finding the smallest radius ball enclosing a given nonnegative intersection of balls.

%   \item [(iii)] We also establish that exact copositive relaxation holds for nonnegative extended trust-region problems under easily verifiable conditions, satisfying the geometric property.
  \end{itemize}

The outline of the paper is as follows. Section 2 presents zero duality gap results between a  nonnegative quadratic program and its semi-Lagrangian dual, under convexifiability.
%Section 3 establishes min-max semi-Lagrangian duality and copositive relaxation, where the optimal value, $\inf(P_1)$, is attained.
Section 3 examines hidden convexifiability and zero duality gaps for quadratic programs with mixed integer variables.
Section 4 provides an important application of our duality to robust optimization, where we  establish that zero duality gaps hold for robust mixed-integer quadratic optimization problems under objective data uncertainty.
Section 5 gives further technical conditions for convexifiability of nonnegative quadratic programs and the historical links between our duality and exact copositive relaxations of quadratic programs.
Finally, Section 6 makes concluding statements with comments on future work.
%Appendix provides the links between our exact copositive relaxation results and the semi-Lagrangian
%duality results of the recent literature under the convex-like geometry condition.

\section{Convexifiability of Nonnegative QPs and Duality}

%In this section,  we  examine  exact copositive relaxations and  semi-Lagrangian duality,  under a geometrical condition,  for a broad class of   nonconvex  quadratic programs with nonnegativity constraints.

Consider the following nonconvex quadratic optimization problem:

$$  (P_1)\quad \begin{array}{rl} &\displaystyle\inf_{x} x^TAx+b^Tx+c\\
&\  \mbox{s.t.}\quad  x\in \R^n_+,\  x^TA_ix+b_i^Tx+c_i \leq 0,\,  i=0,1,...,m,\end{array}$$
%\begin{equation*} \begin{array}{rl} &&(P) \quad \inf & x^TAx+b^Tx+c \\
%&&\mbox{s.t.} & x^TA_ix+b_i^Tx+c_i \leq 0,\,  i=0,1,...,m,\, \\
%&&&x_i\ge 0, i=1,2, \ldots, n,\end{array}
%\end{equation*}
where $A, A_i\in S^n$ are $(n\times n)$ symmetric matrices and $b,b_i\in \R^n,$ $c, c_i\in \R,$ $i=0,1,...,m$. Here $S^n$ denotes the set of  $(n\times n)$ symmetric matrices. {\it In the sequel we always assume that the feasible set of the problem $(P_1)$ is non-empty.}

For the problem $(P_1)$, let  $f(x)=x^TAx+b^Tx+c,$  $g_i(x)=x^TA_ix+b_i^Tx+c_i,$ $i=0,1,...,m$, and
$$\mathcal{A}_{P_1}:=\big\{\big(g_0(x), g_1(x),..., g_m(x), f(x)\big) : x\in\R^n_+\big\}+\R^{m+2}_+\ . $$
% and
% $$\mathcal{A}_{P_1}^{\epsilon}:=\big\{\big(g_0(x), g_1(x),..., g_m(x), f(x)-\inf(P_1)+\epsilon\big) : x\in\R^n_+\big\}+\R^{m+2}_+$$
%By construction, clearly, for each $\epsilon>0$,
%$(0_{{\mathbb{R}^{m+1}}}, \inf(P_1)-\epsilon) \notin \mathcal{A}_{P_1}$ (otherwise, there exists $x\in \R^n_+$ such that $g_i(x) \le 0$, $i=0,1,\ldots,m$ and $f(x) \le \inf(P_1)-\epsilon$ which
%is impossible).
We now define a key geometric property, called convexifiablity, which will play a key role in establishing zero duality gap between $(P_1)$ and $(D_1)$. Recall that for a set $A$,
its closure and convex hull are denoted by ${\rm cl} A$ and ${\rm conv} A$ respectively.
\begin{definition}[{\bf Convexifiability}] The problem $(P_1)$ is said to be {\bf convexifiable} whenever
\[
\big(\{0_{{\mathbb{R}^{m+1}}}\} \times \mathbb{R}\big) \cap {\rm cl \, conv} \mathcal{A}_{P_1}= \big(\{0_{{\mathbb{R}^{m+1}}}\} \times \mathbb{R}\big) \cap  \mathcal{A}_{P_1}.
\]
The problem $(P_1)$ is said to be {\bf strongly convexifiable} whenever the set $\mathcal{A}_{P_1}$ is closed and convex.

%
%for all $\epsilon>0$,  \[
%(0_{{\mathbb{R}^{m+1}}}, \inf(P_1)-\epsilon) \notin   {\rm cl \, conv} \mathcal{A}_{P_1}.
%\]
\end{definition}
It easily follows from the definition that if the problem $(P_1)$ is strongly convexifiable then it is convexifiable.  We will see in Sections 3-4, convexifiability can be satisfied by many important classes of  specially structured optimization problems, such as mixed integer quadratic optimization problems, robust counterparts of uncertain mixed integer quadratic optimization problems in the face of objective data uncertainty. However, the strong convexifiability is often much harder to be satisfied than convexifiability even for problems in one or two dimensions as we see in the following simple examples.

% As we saw in Lemma~\ref{lm31}, the closed convexity requirement of the set $\mathcal{A}_{P_1}$ is satisfied by nonnegative homogeneous quadratic programs with a single quadratic constraint under suitable conditions and it is a sufficient condition in general for convexifiability of quadratic programs $(P_1)$. However, this requirement is much stronger than the convexifiability property as we see in the following simple examples.

%We have seen that the closedness and convexity of
%$\mathcal{A}_{P_1}$ is satisfied for nonconvex homogeneous quadratic program with single quadratic constraints and nonnegative variables, under a strict copositive condition. This, in
%particular, implies the convexifiability of this problem. On the other hand, as we will see from the following two simple examples, convexifibility is, in general,  a strictly weaker requirement than the closedness and convexity of
%$\mathcal{A}_{P_1}$, even for problems of one or two dimension.
\begin{example}\label{count1}
Consider the problem $(P_1)$, where $g_0(x)=x(x-1)$, $g_1(x)=-x(x-1)$, $f(x)=x^2$. Then, $\mathcal{A}_{P_1}=\{(g_0(x),g_1(x),f(x)): x \ge 0\}+\R^3_+$.
We first see that  problem $(P_1)$ is convexifiable. Noting that $\mathcal{A}_{P_1} \subseteq \mathbb{R}^2 \times [0,+\infty)$, one has $\big(\{0_{{\mathbb{R}^{2}}}\} \times \mathbb{R}\big) \cap {\rm cl \, conv} \mathcal{A}_{P_1} \subseteq \{0_{\R^2}\} \times [0,+\infty)$. Moreover, it can be directly verified that $\big(\{0_{\R^2}\} \times \mathbb{R}\big) \cap  \mathcal{A}_{P_1}=\{0_{\R^2}\} \times [0,+\infty)$.
So, we must have $\big(\{0_{{\mathbb{R}^{2}}}\} \times \mathbb{R}\big) \cap {\rm cl \, conv} \mathcal{A}_{P_1}= \big(\{0_{{\mathbb{R}^{2}}}\} \times \mathbb{R}\big) \cap  \mathcal{A}_{P_1}=\{0_{\R^2}\} \times [0,+\infty)$. Hence the problem is convexifiable.
 %and hence, the underlying problem is convexifiable.

On the other hand, direct verification shows that $(0,0,0) \in \mathcal{A}_{P_1}$ and $(2,-2,4) \in \mathcal{A}_{P_1}$ (consider $x=0$ and $x=2$ respectively). But
their mid point $(1,-1,2) \notin \mathcal{A}_{P_1}$ (otherwise, there exists $x\ge 0$ such that
\[
x(x-1) \le 1, -x(x-1) \le -1 \mbox{ and } x^2 \le 2
\]
The first two relations imply that $x(x-1)=1$ and so, $x=\frac{\sqrt{5}+1}{2}$ (as $x \ge 0$). This contradicts the fact that $x^2 \le 2$.
 So, $\mathcal{A}_{P_1}$ is not convex and hence the problem is not strongly convexifiable.
\end{example}
\begin{example}\label{count2}
Consider the problem $(P_1)$, where $g_0(x)=-x_1x_2$, $f(x)=x_1$. Then, $\mathcal{A}_{P_1}=\{(g_0(x),f(x)): x \in \R^2_+\}+\R^2_+$.Noting that $\mathcal{A}_{P_1} \subseteq \mathbb{R} \times [0,+\infty)$, one has $\big(\{0\} \times \mathbb{R}\big) \cap {\rm cl \, conv} \mathcal{A}_{P_1} \subseteq \{0\} \times [0,+\infty)$. Moreover, it can be directly verified that $\big(\{0\} \times \mathbb{R}\big) \cap  \mathcal{A}_{P_1}=\{0\} \times [0,+\infty)$. So, we must have $\big(\{0\} \times \mathbb{R}\big) \cap {\rm cl \, conv} \mathcal{A}_{P_1}= \big(\{0\} \times \mathbb{R}\big) \cap  \mathcal{A}_{P_1}$, and hence, the problem $(P_1)$ is convexifiable.

On the other hand, direct verification shows that $(-1,0) \notin \mathcal{A}_{P_1}$; while $(-1,\frac{1}{k}) \in \mathcal{A}_{P_1}$ for all $k \in \mathbb{N}$ (by considering $x_1=\frac{1}{k}$ and $x_2=k$). So,
$\mathcal{A}_{P_1}$ is not closed and the problem is not strongly convexifiable.
\end{example}

%We first  establish an exact copositive relaxation result for $(P_1)$ under convexifiability and present a convex-like geometric condition guaranteeing convexifiability.
%We will then justify at the end of this section that the homogeneous nonnegative quadratic programming with single constraints satisfied the

Recall that the semi-Lagrangian dual of $(P_1)$ is given by
$$(D_1) \quad \sup \limits_{u} \Theta(u)\quad \mbox{s.t.}\ \,  u\in \R^{m+1}_+,$$
where  $\Theta(u)$ is defined as  $\Theta(u):=\inf\limits_{x\in \R^n_+}L(x,u)$ with $L(x,u):=f(x)+\sum\limits_{i=0}^mu_ig_i(x)$ and  $f(x)=x^TAx+b^Tx+c,$  $g_i(x)=x^TA_ix+b_i^Tx+c_i,$ $i=0,1,...,m.$
So, by construction, we see that  \begin{equation}\label{eqbz}\inf (P_1)\geq \sup(D_1).\end{equation}

We now show that there is no duality gap between $(P_1)$ and $(D_1)$ whenever the problem $(P_1)$ is convexifiable.

\begin{theorem}\label{lemma2.1} {\bf (Zero Duality Gaps via Convexifiability)} If the problem~$(P_1)$ is convexifiable then, we have
 $$ \inf (P_1)= \sup (D_1).$$
 In particular, if the problem~$(P_1)$ is strongly convexifiable  and $\inf(P_1)>-\infty$, then
 $$ \min (P_1)=\sup (D_1).$$
\end{theorem}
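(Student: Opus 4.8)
The plan is to pass to the image space $\R^{m+2}$ and to read off both optimal values as the height at which the vertical line $\{0_{\R^{m+1}}\}\times\R$ meets the relevant set. Write $C:={\rm cl\,conv}\,\mathcal{A}_{P_1}$ and $t^\ast:=\inf\{t\in\R:(0_{\R^{m+1}},t)\in C\}$. First I would record two elementary facts. On the primal side, $(0_{\R^{m+1}},t)\in\mathcal{A}_{P_1}$ holds precisely when some feasible $x\in\R^n_+$ satisfies $f(x)\le t$, so that $\inf(P_1)=\inf\{t:(0_{\R^{m+1}},t)\in\mathcal{A}_{P_1}\}$. On the dual side I would note the support property: for every $u\in\R^{m+1}_+$ and every $(z_0,\dots,z_m,w)\in\mathcal{A}_{P_1}$ one has $\sum_{i=0}^m u_iz_i+w\ge\Theta(u)$, since $u_i\ge0$ and $z_i\ge g_i(x)$, $w\ge f(x)$ for some $x\in\R^n_+$. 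As this is a linear inequality in $(z,w)$, it survives passage to $C$; evaluating it at any $(0_{\R^{m+1}},t)\in C$ yields $t\ge\Theta(u)$ for all $u$, hence $t^\ast\ge\sup(D_1)$. This already re-proves weak duality \eqref{eqbz}.

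The crux, and where I expect the real work, is the reverse inequality $\sup(D_1)\ge t^\ast$. If $t^\ast=-\infty$ there is nothing to prove, so assume $t^\ast>-\infty$ and fix any $t<t^\ast$, so that $(0_{\R^{m+1}},t)\notin C$. Since $C$ is nonempty, closed and convex, I would strictly separate: there are $(\eta,s)\in\R^{m+1}\times\R$ and $\alpha\in\R$ with $s\,t<\alpha\le\langle\eta,z\rangle+s\,w$ for all $(z,w)\in C$. Two sign facts must then be settled. Because $\mathcal{A}_{P_1}$, and hence $C$, is invariant under adding $\R^{m+2}_+$, the separating functional is bounded below on a set whose recession cone contains $\R^{m+2}_+$, which forces $\eta\ge0$ and $s\ge0$. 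The decisive point is $s>0$: if $s=0$ then $\alpha>0$, yet $(0_{\R^{m+1}},t^\ast)\in C$ (available since $C$ is closed, upward-invariant and $t^\ast>-\infty$) would give $0=\langle\eta,0\rangle\ge\alpha>0$, a contradiction. Rescaling by $s$ and setting $u:=\eta/s\in\R^{m+1}_+$, the inequality becomes $\langle u,z\rangle+w>t$ for all $(z,w)\in C$; specializing to $(g_0(x),\dots,g_m(x),f(x))$ with $x\in\R^n_+$ gives $L(x,u)>t$, so $\Theta(u)\ge t$ and $\sup(D_1)>t$. Letting $t\uparrow t^\ast$ yields $\sup(D_1)\ge t^\ast$, and therefore $\sup(D_1)=t^\ast$. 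I regard the exclusion of a vertical separating hyperplane ($s>0$) as the main obstacle.

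It remains to compare the two heights, and this is exactly what convexifiability delivers. The hypothesis states that the vertical slices of $C$ and of $\mathcal{A}_{P_1}$ coincide, so their infima agree, i.e. $t^\ast=\inf(P_1)$. Combined with the previous paragraph, $\sup(D_1)=t^\ast=\inf(P_1)$. (Note this also covers $\inf(P_1)=-\infty$, in which case $t^\ast=-\infty$ and both values collapse to $-\infty$.)

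For the strong-convexifiability statement I would argue as follows. Strong convexifiability makes $\mathcal{A}_{P_1}$ itself closed and convex, so $C=\mathcal{A}_{P_1}$ and the first part gives $\inf(P_1)=\sup(D_1)$. Assuming in addition $\inf(P_1)>-\infty$, the vertical slice $\{t:(0_{\R^{m+1}},t)\in\mathcal{A}_{P_1}\}$ is closed (being the slice of a closed set along a line) and, by invariance under $\R^{m+2}_+$, equals $[\inf(P_1),+\infty)$; in particular $(0_{\R^{m+1}},\inf(P_1))\in\mathcal{A}_{P_1}$. Unpacking this membership produces a feasible $x\in\R^n_+$ with $f(x)\le\inf(P_1)$, whence $f(x)=\inf(P_1)$. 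Thus the infimum is attained and $\min(P_1)=\sup(D_1)$.
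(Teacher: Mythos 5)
Your proof is correct, and it reaches the conclusion by a genuinely different decomposition than the paper, even though both rest on the same engine: strong separation in the image space $\R^{m+2}$, with upward-invariance of the image set under $\R^{m+2}_+$ forcing the multipliers to be nonnegative. The paper invokes convexifiability at the outset: it observes that $(0_{\R^{m+1}},\inf(P_1)-\epsilon)\notin\mathcal{A}_{P_1}$, uses convexifiability to conclude that this point is not in ${\rm cl\,conv}\,\mathcal{A}_{P_1}$, separates it, rules out a vertical hyperplane ($\mu=0$) by evaluating the separated inequality at a feasible point $x_0$ (where $\sum_{i=0}^m\mu_i g_i(x_0)\le 0$), and finally lets $\epsilon\downarrow 0$. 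You instead prove the unconditional identity $\sup(D_1)=t^\ast$, where $t^\ast$ is the bottom of the vertical slice of $C={\rm cl\,conv}\,\mathcal{A}_{P_1}$, and invoke convexifiability only at the last step to identify $t^\ast=\inf(P_1)$; correspondingly, your exclusion of vertical hyperplanes uses the membership $(0_{\R^{m+1}},t^\ast)\in C$ rather than a feasible point of $(P_1)$. Your decomposition buys something real: it exhibits the duality gap as exactly $\inf(P_1)-t^\ast$, so convexifiability is seen to be precisely the condition that closes it; it absorbs the case $\inf(P_1)=-\infty$ without a separate argument; and it is consistent with the paper's Example 2.3, where $t^\ast=-\infty=\sup(DE_1)$ while $\inf(E_1)=-1$. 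The paper's route is slightly more economical in that it never needs the membership $(0_{\R^{m+1}},t^\ast)\in C$. Two points you should make explicit: (i) both the finiteness $t^\ast<+\infty$ and the membership $(0_{\R^{m+1}},t^\ast)\in C$ require the vertical slice to be nonempty, which follows from the paper's standing assumption that $(P_1)$ is feasible (a feasible $x_0$ yields $(0_{\R^{m+1}},f(x_0))\in\mathcal{A}_{P_1}$), and your $s>0$ argument rests on this; (ii) from $\Theta(u)\ge t$ you may only conclude $\sup(D_1)\ge t$, not $\sup(D_1)>t$, but the weak inequality is all you need when letting $t\uparrow t^\ast$. Your attainment argument under strong convexifiability is essentially the paper's (closedness of $\mathcal{A}_{P_1}$ forcing $(0_{\R^{m+1}},\inf(P_1))\in\mathcal{A}_{P_1}$), phrased via the slice rather than a minimizing sequence.
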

\begin{proof}
If $\inf(P_1)=-\infty$ then the conclusion immediately follows from \eqref{eqbz}. As the problem $(P_1)$ is feasible,  $\inf(P_1)$ is finite.
Fix any $\epsilon>0$. By construction,
$(0_{{\mathbb{R}^{m+1}}}, \inf(P_1)-\epsilon) \notin \mathcal{A}_{P_1}$ (otherwise, there exists $x\in \R^n_+$ such that $g_i(x) \le 0$, $i=0,1,\ldots,m$ and $f(x) \le \inf(P_1)-\epsilon$ which
is impossible). As problem $(P_1)$ is convexifiable, we see that
\begin{eqnarray}\label{eq:conclu}
(0_{{\mathbb{R}^{m+1}}}, \inf(P_1)-\epsilon) &\notin & {\rm cl \, conv} \mathcal{A}_{P_1}. % \\
 %& = & {\rm cl \, conv} \bigg(\big\{\big(g_0(x), g_1(x),..., g_m(x), f(x)\big) : x\in\R^n_+\big\}+\R^{m+2}_+\bigg),
\end{eqnarray}
Indeed, if this not the case, that is, $(0_{{\mathbb{R}^{m+1}}}, \inf(P_1)-\epsilon) \in {\rm cl \, conv} \mathcal{A}_{P_1}$, then we see that
$(0_{{\mathbb{R}^{m+1}}}, \inf(P_1)-\epsilon) \in \big(\{0_{{\mathbb{R}^{m+1}}}\} \times \mathbb{R}\big) \cap {\rm cl \, conv} \mathcal{A}_{P_1}$. Then, the convexifiability assumption gives
us that $(0_{{\mathbb{R}^{m+1}}}, \inf(P_1)-\epsilon) \in \big(\{0_{{\mathbb{R}^{m+1}}}\} \times \mathbb{R}\big) \cap \mathcal{A}_{P_1}$ which makes contradiction. So, \eqref{eq:conclu} holds.
The strong convex separation theorem implies that there exist $\mu_i \in \mathbb{R}$, $i=0,1,\ldots,m$ and $\mu \in \mathbb{R}$ with $(\mu_0,\mu_1,\ldots,\mu_{m},\mu) \neq 0_{\mathbb{R}^{m+2}}$ such that
\[
\mu (\inf(P_1)-\epsilon) < \sum_{i=0}^m \mu_i u_i + \mu r \mbox{ for all } (u_0,u_1,\ldots,u_m,r) \in \mathcal{A}_{P_1}.
\]
This implies that $(\mu_0,\mu_1,\ldots,\mu_{m},\mu) \in \mathbb{R}^{m+2}_+ \backslash \{0_{\mathbb{R}^{m+2}}\}$ and
\[
\mu \big(\inf(P_1)-\epsilon\big) < \sum_{i=0}^m \mu_i g_i(x) + \mu f(x) \mbox{ for all } x \in \mathbb{R}^n_+.
\]
We observe that $\mu>0$. (Otherwise, one has $(\mu_0,\ldots,\mu_m) \neq 0_{\mathbb{R}^{m+1}}$ and
\[
\sum_{i=0}^m \mu_i g_i(x) >0 \mbox{ for all } x \in \mathbb{R}^n_+.
\]
Let $x_0$ be a feasible point of $(P_1)$. Then, $g_i(x_0) \le 0$ and $\sum_{i=0}^m \mu_i g_i(x)\le 0$ which is impossible.) Thus, by dividing $\mu$ on both sides, one has

\[
f(x)+\sum_{i=0}^m \bar u_i g_i(x)-(\inf(P_1)-\epsilon)>0 \mbox{ for all } x \in \mathbb{R}^n_+,
\]
where $\bar u_i= \frac{\mu_i}{\mu} \ge 0$, $i=0,1,\ldots,m$. Consider
\begin{equation} (D_1) \quad \sup \limits_{u} \Theta(u)\quad \mbox{s.t.}\ \,  u\in \R^{m+1}_+,\end{equation}
where  $\Theta(u)$ is given by $\Theta(u):=\inf\limits_{x\in \R^n_+}L(x,u)$ with $L(x,u):=f(x)+\sum\limits_{i=0}^mu_ig_i(x)$ and  $f(x)=x^TAx+b^Tx+c,$  $g_i(x)=x^TA_ix+b_i^Tx+c_i,$ $i=0,1,...,m.$
% From the construction, it can be verified that \begin{equation}
%\sup (D_1)=\sup_{(y_0,u) \in \mathbb{R} \times \mathbb{R}_+^{m+1}}\{y_0: \inf\limits_{x\in \R^n_+}L(x,u) \ge y_0\}=\sup (CP_1^*) .\end{equation}
This implies that, for each $\epsilon>0$,
\[
\sup(D_1)=\sup_{u \in \mathbb{R}^{m+1}_+}\inf_{x \in \mathbb{R}^n_+}L(x,u)\geq \inf_{x \in \mathbb{R}^n_+}L(x,\bar u) \ge \inf(P_1)-\epsilon.
\]
Letting $\epsilon \rightarrow 0$, one has $\sup(D_1) \ge \inf(P_1)$. Therefore, the conclusion follows from \eqref{eqbz}.

Now assume that the problem $(P_1)$ is strongly convexifiable (i.e. $\mathcal{A}_{P_1}$ is closed and convex) and $\inf (P_1)>-\infty$. Then, $$\big(\{0_{{\mathbb{R}^{m+1}}}\} \times \mathbb{R}\big) \cap {\rm cl \, conv} \mathcal{A}_{P_1}= \big(\{0_{{\mathbb{R}^{m+1}}}\} \times \mathbb{R}\big) \cap  \mathcal{A}_{P_1},$$ that is,
%from the definition of $\inf(P_1)$, we see that, for each $\epsilon>0$, $(0_{\mathbb{R}^{m+1}},\inf(P_1)-\epsilon) \notin \mathcal{A}_{P_1}$.   So, by the assumption that
%$\mathcal{A}_{P_1}$ is closed and convex, we see that  $(0_{\mathbb{R}^{m+1}},\inf(P_1)-\epsilon) \notin \mathcal{A}_{P_1}= {\rm cl \, conv}\mathcal{A}_{P_1}$, and thus,
problem~$(P_1)$ is convexifiable. This guarantees that $ \inf (P_1)= \sup (D_1).$
To finish the proof, it remains to show that the minimum in $(P_1)$ is  attained. To see this, let $x^{(k)}$ be feasible for $(P_1)$ such that
$f(x^{(k)}) \rightarrow \inf(P_1)$.  Then, $g_i(x^{(k)}) \le 0$, $i=0,1,\ldots,m$ and so, $(0_{\mathbb{R}^{m+1}},f(x^{(k)})) \in \mathcal{A}_{P_1}$. As $\mathcal{A}_{P_1}$ is closed,
we see that its limit $(0_{\mathbb{R}^{m+1}},\inf (P_1)) \in \mathcal{A}_{P_1}$. This shows that there exists $\bar x \in \mathbb{R}^n_+$ such that $\bar x$ is feasible for $(P_1)$ and $f(\bar x)=\inf(P_1)$. In other
 words, $\bar x$ is a solution for $(P_1)$. % Moreover, let $\bar X=\left(\begin{array}{c}
%                                                                         1 \\
%                                                                         \bar x
%                                                                        \end{array}
%  \right)\left(\begin{array}{c}
%                                                                         1 \\
%                                                                         \bar x
%                                                                        \end{array}
%  \right)^T$. Then, ${\rm Tr}(H\bar X)=f(\bar x)=\inf(P_1)=\inf(CP_1)$. This implies that the minimum of $(CP_1)$ is also attained.
\end{proof}

% We now see. as an immediate consequence, that the convexity condition, ``$\mathcal{A}_{P_1}$ is closed and convex'' implies the required convexfiability, and in particular,
% an exact copositive relaxation.

In passing it is worth noting that various forms of convex-like conditions of non-convex programs have been utilized to obtain exact semi-definite and exact second-order cone relaxation results recently
 for specially structured nonconvex quadratic optimization problems including the extended trust region problems \cite{jl14,jl16} and quadratic problems with a single constraint \cite{Fabian1,Fibian2}.

We now present a simple one-dimensional example illustrating an infinite duality gap for a  nonnegative quadratic program that is not convexifiable.

\begin{example}\label{e-inexact}{\rm {\bf (Failure of zero duality gaps without convexifiability)}
Consider the one-dimensional nonconvex quadratic optimization problem
$$(E_1)\quad \begin{array}{rl}& \inf\  -x^2 \\
& \mbox{s.t.} \  x-1 \le 0, \\
& \quad\ \ x \ge 0.
\end{array}
$$
Clearly, the optimal value of $(E_1)$ is $-1$ and the optimal solution is $x=1$.

Its semi-Lagrangian dual is
\[
(DE_1)\ \ \ \sup_{u \ge 0} \Theta(u)
\]
where $\Theta(u)=\inf_{x \in \mathbb{R}_+}\{-x^2+ u(x-1)\}$. It is not hard to see that for any $u \ge 0$, $\Theta(u)=-\infty$, and so, $\sup(DE_1)=-\infty$. Thus, there is an
infinite gap between the optimal values of $(E_1)$ and its semi-Lagrangian dual $(DE_1)$.

% Its complete positivity relaxation can be formulated as
% $$(CE_1) \quad\begin{array}{rl} & \inf\limits_{X \in \mathcal{C}} \  -X_{22} \\
% & \mbox{s.t.} \  X_{12} \le 1, \\
% & \, \  \quad X_{11}=1.\end{array}
% $$
% We first see that the semi-Lagrangian duality fails. Note that exact semi-Lagrangian duality implies the optimal value of $(E_1)$ equals to the optimal value
% of $(CE_1)$. To see semi-Lagrangian duality fails, we next see that there is an infinite gap between the optimal value of $(E_1)$ and the optimal value
% of $(CE_1)$. To this end, let
%  $X_k=\frac{1}{k}\left(\begin{array}{c}
% 1\\
% k
%               \end{array}
% \right) \left(\begin{array}{c}
% 1\\
% k
%               \end{array}
% \right)^T+(1-\frac{1}{k})\left(\begin{array}{c}
% 1\\
% 0
%               \end{array}
% \right) \left(\begin{array}{c}
% 1\\
% 0
%               \end{array}
% \right)^T
%  =\left(\begin{array}{cc}
%        1 & 1 \\
%        1 & k
%       \end{array}\right)$, $k \in \mathbb{N}$, which is feasible for $(CE_1)$. This shows that the optimal value of $(CE_1)$ is $-\infty.$
%       and so, the copositive relaxation has an infinite gap. As a result, semi-Lagrangian duality fails.

Direct verification shows that
\[
\mathcal{A}_{E_1}:=\big\{\big(x-1, -x^2\big) : x\ge 0\big\}+\R^{2}_+ = \big\{\big(x, -x^2\big) : x\ge 0\big\}+\{(-1,0)\}+\R^{2}_+,
\]
% nor
% \[
% \mathcal{A}_{E_2}^+:=\big\{\big(x-1, -x^2\big) : x\ge 0\big\}+{\rm int}\R^{2}_+ = \big\{\big(x, -x^2\big) : x\ge 0\big\}+\{(-1,0)\}+{\rm int}\R^{2}_+,
% \]
is not a convex set. Moreover, note that ${\rm cl}\, {\rm conv} \, \mathcal{A}_{E_1}=\{(x_1,x_2): x_1 \ge -1\}$. It follows that
\[
(\{0\}\times \mathbb{R}) \cap {\rm cl}\, {\rm conv} \, \mathcal{A}_{E_1}=\{(x_1,x_2): x_1 \ge -1\}= (\{0\}\times \mathbb{R})
\]
and
\[
(\{0\}\times \mathbb{R}) \cap  \mathcal{A}_{E_1}= \{0\}\times [-1,+\infty).
\]
%
% $\inf(E_1)=-1$ and for all $\epsilon>0$
%\[
%(0,-1-\epsilon) \in {\rm cl}\, {\rm conv} \, \mathcal{A}_{E_1}=\{(x_1,x_2): x_1 \ge -1\}.
%\]
Thus, we see that convexifiability condition fails for the problem $(E_1)$.}
\end{example}

\subsection*{Strong convexifiability of Homogeneous quadratic programs}
 Let us consider the nonconvex homogeneous quadratic program with single quadratic constraints and nonnegative variables (HQP)
 $$(HQP) \ \ \quad \min\{x^TAx: x \in \mathbb{R}^n_+, \, x^TBx \le 1 \},$$
where $A,B$ are  $(n\times n)$ symmetric matrices. Let $e \in \mathbb{R}^n$ be the vector whose elements are all one.
% and $B$ is a strict copositive matrix.
We now show that our geometric condition always  holds for the  nonconvex homogeneous quadratic program (HQP)
if $B$ is a strictly copositive matrix.

% The following lemma will play an essential role in proving the main result of this section.
\begin{proposition}\label{lm31} {\bf (Strong convexifiability of (HQP))} Let $A, B\in  S^n$ such that  $B$ is strictly copositive. Then, (HQP) is strongly convexifiable.

\end{proposition}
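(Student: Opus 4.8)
The plan is to pin down the set $\mathcal{A}_{HQP}$ explicitly as a translate of a two-dimensional polyhedral convex cone, with strict copositivity of $B$ supplying the compactness that makes the geometry rigid. Writing $f(x)=x^TAx$ and $g_0(x)=x^TBx-1$, we have $\mathcal{A}_{HQP}=\mathcal{C}+\{(-1,0)\}+\R^2_+$, where $\mathcal{C}:=\{(x^TBx,x^TAx):x\in\R^n_+\}$. Since closedness and convexity are preserved under the translation by $(-1,0)$, it suffices to show that $\mathcal{C}+\R^2_+$ is closed and convex, and for that I would first determine $\mathcal{C}$ exactly.

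First I would exploit strict copositivity to control $\mathcal{C}$. Because $B$ is strictly copositive, $x^TBx>0$ for every $x\in\R^n_+\setminus\{0\}$, and a compactness argument on $\{x\in\R^n_+:\|x\|=1\}$ yields a constant $m>0$ with $x^TBx\ge m\|x\|^2$ on $\R^n_+$. Hence the level set $S:=\{x\in\R^n_+:x^TBx=1\}$ is compact and nonempty, and it is connected, being the continuous radial image $x\mapsto x/\sqrt{x^TBx}$ of the path-connected set $\R^n_+\setminus\{0\}$. The continuous function $x\mapsto x^TAx$ therefore maps $S$ onto a closed bounded interval $[\rho_{\min},\rho_{\max}]$ by the intermediate value theorem.

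Next I would reconstruct $\mathcal{C}$ from this interval by homogeneity: for $x\in\R^n_+\setminus\{0\}$ one has $(x^TBx,x^TAx)=x^TBx\,\big(1,(x^TAx)/(x^TBx)\big)$ with ratio in $[\rho_{\min},\rho_{\max}]$, and conversely every point $(b,b\rho)$ with $b\ge 0$, $\rho\in[\rho_{\min},\rho_{\max}]$ is attained by scaling an element of $S$. This identifies $\mathcal{C}$ with the closed convex wedge $\{(b,a):b\ge 0,\ \rho_{\min}b\le a\le\rho_{\max}b\}=\mathrm{cone}\{(1,\rho_{\min}),(1,\rho_{\max})\}$. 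Consequently $\mathcal{C}+\R^2_+=\mathrm{cone}\{(1,\rho_{\min}),(1,\rho_{\max}),(1,0),(0,1)\}$ is a finitely generated, i.e. polyhedral, convex cone, hence closed and convex; translating by $(-1,0)$ then shows $\mathcal{A}_{HQP}$ is closed and convex, so $(HQP)$ is strongly convexifiable.

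I expect the heart of the argument to be the identification of $\mathcal{C}$ as the wedge, which is precisely where strict copositivity is indispensable: it guarantees compactness of the $B$-level set (so that $\rho_{\min},\rho_{\max}$ are finite and the wedge is pointed within $\{b\ge 0\}$) and connectedness (so that the image is a full interval with no gaps). Once $\mathcal{C}$ is pinned down as a polyhedral cone in $\R^2$, closedness of the Minkowski sum with $\R^2_+$ is automatic and needs no delicate limiting argument.
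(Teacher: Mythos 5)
Your proof is correct, and it takes a recognizably different route from the paper's, though both hinge on the same two ingredients: strict copositivity forcing compactness of the level set $S=\{x\in\R^n_+ : x^TBx=1\}$, and homogeneity/scaling. The paper proves the two required properties separately: closedness of $\Upsilon=\{(x^TBx,x^TAx):x\in\R^n_+\}+\R^2_+$ is established by a sequential argument (any sequence $x_k\in\R^n_+$ with $x_k^TBx_k$ bounded must be bounded, else a normalized limit $d$ would give $d^TBd\le 0$), and convexity is established by a case analysis on the sign of $\alpha^*:=\min\{x^TAx:x\in\R^n_+,\ x^TBx=1\}$, showing $\Upsilon=\R^2_+$ when $\alpha^*\ge 0$ and $\Upsilon=\{(t,\alpha^*t):t\ge 0\}+\R^2_+$ when $\alpha^*<0$. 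You instead pin down the raw image $\mathcal{C}=\{(x^TBx,x^TAx):x\in\R^n_+\}$ exactly as the wedge $\cone\{(1,\rho_{\min}),(1,\rho_{\max})\}$, using compactness and connectedness of $S$ together with the intermediate value theorem, and then obtain closedness and convexity of $\mathcal{A}_{HQP}$ in one stroke from the fact that finitely generated cones are polyhedral, hence closed. Since $\rho_{\min}=\alpha^*$ and the generator $(1,\rho_{\max})$ is always redundant once $\R^2_+$ is added, your single formula $\cone\{(1,\rho_{\min}),(1,\rho_{\max}),(1,0),(0,1)\}$ reproduces both of the paper's cases at once. What your route buys: a sharper structural conclusion (the set $\mathcal{C}$ itself is convex -- a joint-range convexity statement over the orthant that the paper never needs or claims) and no delicate limiting argument for closedness; what it costs is reliance on two external facts the paper avoids, namely the Minkowski--Weyl theorem (finitely generated $\Rightarrow$ closed) and path-connectedness of $\R^n_+\setminus\{0\}$, whereas the paper's argument needs only the attained minimum $\alpha^*$, because adding the recession cone $\R^2_+$ makes everything above the extremal ray irrelevant.
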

\noindent{\it Proof.}  %If $\K=\{0\},$ the conclusion is trivial. Suppose now that $\K\not=\{0\}.$
We first verify the closedness of the set
$$\Upsilon:=\big\{(x^TBx,x^TAx)\ |\ x\in \mathbb{R}^n_+ \big\}+\R^2_+.$$
To see this, let $(b_k,a_k) \in \Upsilon$ with $(b_k,a_k) \rightarrow (\bar b,\bar a)$. Then, there exists $\{x_k\} \subseteq \R^n_+$
such that $x_k^TBx_k \le b_k$ and $x_k^TAx_k \le a_k$. As $B$ is strictly copositive, $x_k \in \R^n_+$ and $x_k^TBx_k \le b_k \rightarrow \bar b$, we see that $\{x_k\}$ is bounded (Otherwise, by passing to subsequence
if necessary, we can assume that $\|x_k\| \rightarrow \infty$ and $\frac{x_k}{\|x_k\|} \rightarrow d$ with $d \in \R^n_+ \backslash\{ 0\}$. Then,
\[d^TBd=\lim_{k \rightarrow \infty}(\frac{x_k}{\|x_k\|})^TB\frac{x_k}{\|x_k\|} \le \lim_{k \rightarrow \infty} \frac{b_k}{\|x_k\|^2}=0.\]
This is impossible due to the strict copositive assumption of $B$.) By passing to subsequence, we assume that $x_k \rightarrow \bar x \in \R^n_+$. Then, letting $k \rightarrow \infty$, we see that
\[
(\bar b, \bar a) \in (\bar x^TB\bar x,\bar x^TA\bar x)+\R^2_+  \subset \Upsilon.
\]
So, $\Upsilon$ is closed.

We next show that $\Upsilon$ is convex. Denote $\alpha^*:=\min\{x^TAx: x \in \mathbb{R}^n_+,  x^TBx=1\}$.  Let us consider the following two  cases.

{\it Case 1:}  $\alpha^* \ge 0$. Then, by the strict copositivity of $B,$ we see that  $x^TAx \ge 0$ for all $x \in \mathbb{R}^n_+$ (otherwise, there exists $d \in \mathbb{R}^n \backslash \{0\}$
such that $d^TAd<0$. As $B$ is strict copositive, $d^TBd>0$. Let $\bar x=\frac{d}{\sqrt{d^TBd}}$. Then, $\bar x^TB\bar x=1$ and $\bar x^TA \bar x<0$. This shows that $\alpha^*<0$ which makes
contradiction.)
Thus, we have
$$\{\big(x^TBx,x^TAx\big):x \in \mathbb{R}^n_+\} \subseteq \mathbb{R}^2_+$$
 and so, $\Upsilon \subseteq  \R^{2}_+$. Note that $(0,0) \in \{\big(x^TBx,x^TAx\big):x \in \mathbb{R}^n_+\}$, and so,
$\Upsilon  \supseteq  \R^{2}_+$. This shows that $\Upsilon= \R^{2}_+$ which is convex.

{\it Case 2:} $\alpha^*<0$. We verify the convexity of $\Upsilon$ by showing that
$$\Upsilon=\{(t,\alpha^* t): t \ge 0\}+ \R^{2}_+.$$ To see this, let $(u,v) \in \Upsilon$. Then, there exists $x \in \mathbb{R}^n_+$
such that $x^TBx \le u$ and $x^TAx  \le v$.
Let
$t=x^TBx \ge 0$. From  the definition of $\alpha^*$ it follows  that
$x^TAx \ge \alpha^* t$. So, $t \le u$ and $\alpha^* t \le v$, that is, $(u,v) \in \{(t,\alpha^* t): t \ge 0\}+ \R^{2}_+$. Thus, $\Upsilon \subseteq \{(t,\alpha^* t): t \ge 0\}+ \R^{2}_+$.
On the other hand, let $(u,v) \in \{(t,\alpha^* t): t \ge 0\}+ \R^{2}_+$. Then,  there exists $t \ge 0$  such that $t<u$ and $ \alpha^* t \le v$.
%If $t=0$, then $u<0$ and $v<0$, and so, $(u,v) \in K$. So, we can assume $t>0$.
Let $x^* \in \mathbb{R}^n_+$ be a solution of $\min\{x^TAx: x \in \mathbb{R}^n_+, x^TBx=1\}$
and let $z=(z_1,\ldots,z_n)$ with $z_i= \sqrt{t} \, x_i^*$. Then, $z \in \mathbb{R}^n_+$, $z^TBz=t$ and $z^TAz= t (x^*)^TAx^*=\alpha^* t$. So,
$z^TBz \le u$ and $z^TAz \le v$, and hence $(u,v) \in \Upsilon$. Thus, the reverse inclusion also holds.
Consequently,  $\Upsilon$ is convex.  Thus, the $\Upsilon$ is closed and convex and so, the required convex-like geometric condition for convexifiability is satisfied by the problem (HQP).  $\hfill\Box$

\begin{remark}
In the special case of  $B=ee^T$, by introducing a nonnegative
slack variable,  (HQP) can be equivalently rewritten as the so-called
standard quadratic optimization problem, which is a well-known class of optimization problems admitting an exact copositive relaxation \cite{BDKR00}. The links between semi-Lagrangian duality and exact copositive relaxation are given in Appendix later in the paper.
\end{remark}

The conclusion of Proposition~\ref{lm31} may fail if the strict copositivity assumption is removed.
{\rm  \begin{example}{\bf (Failure of strong convexifiability without strict copositivity)} {\rm Let
$B=\begin{pmatrix} 1& 1\\ 1 &-1\end{pmatrix}$ and  $A=\begin{pmatrix} -2& 1\\ 1 &1\end{pmatrix}$.
We first observe that $A, B$ are symmetric matrices and neither $A$ nor $B$ is copositive because they both have negative diagonal elements.
We now see that
$$\begin{array}{rl}\Upsilon&:=\big\{(x^TBx,x^TAx)\ |\ x\in \R^2_+\big\}+ \R^2_+\\
&=\big\{(x_1^2-x_2^2+2x_1x_2,-2x_1^2+x_2^2+2x_1x_2)\ |\ (x_1,x_2)\in \R^2_+\big\}+ \R^2_+\end{array}$$  is nonconvex. To see this, let
$g(x)=x_1^2-x_2^2+2x_1x_2$ and $f(x)=-2x_1^2+x_2^2+2x_1x_2$. Note that $g(0,1)=-1$, $f(0,1)=1$, $g(1,0)=1$ and $f(1,0)=-2$. We see that
\[
(-1,1) \in \Upsilon \mbox{ and } (1,-2) \in \Upsilon.
\]
We now verify that their midpoint $(0,-1/2)=\frac{(-1,1)+(1,-2)}{2} \notin \Upsilon$. Suppose to the contrary that
$(0,-1/2) \in \Upsilon$. Then, there exists $(x_1,x_2)$ such that
% we note that, for any $\epsilon>0$,
% $(n^2+1,-2n^2+1) \in \Upsilon$ and $(-n^2+1,n^2+1) \in \Upsilon$. But their mid point $(1,1-\frac{n^2}{2})$.
%suppose to the contrary that $\Upsilon$ is convex. Note that the system
$$\begin{cases} x_1^2-x_2^2+2x_1x_2 \le 0,\\
-2x_1^2+x_2^2+2x_1x_2 \le -1/2, \\
x_1\geq 0,\  x_2\geq 0.
\end{cases}
$$
%We first note that $x_2 \neq 0$ (otherwise, the first and second inequalities imply that $x_1=0$ and $2x_1^2 \ge 1/2$ which is impossible).
The first inequality gives us that
$(x_1+x_2)^2 \le 2x_2^2$ and so, $x_1 \le (\sqrt{2}-1)x_2$. On the other hand, adding the first and second inequality, one has
\[
x_1(-x_1+4x_2)=-x_1^2+4x_1x_2 \le -1/2<0,
\]
which implies that $x_1>4 x_2.$ So, we have $4x_2<x_1 \le (\sqrt{2}-1)x_2$ which cannot happen due to $x_1,x_2 \ge 0$. This contradiction shows that $\Upsilon$ is not convex.
% has no solution. So, $\inf\{x_1^2-x_2^2+2x_1x_2: -2x_1^2+x_2^2+2x_1x_2 \le 0, x_1,x_2 \ge 0\} \ge 0$. Note that Slater condition is satisfied with $\hat{x}=(2,1)$ for this
% optimization problem. Thus, by Theorem~\ref{thm2.1}, the optimal value of its copositive relaxation is attained and is larger or equal to $0$. So, there exists some $t \ge 0$ such that
% $$\begin{pmatrix} 0&0&0\\
% 0&1& 1\\
% 0&1 &-1\end{pmatrix}+t\begin{pmatrix} 0&0&0\\
% 0&-2& 1\\
% 0&1 &1\end{pmatrix}\ \mbox{is  $\R^3_+$-copositive}.$$
% This implies that
% $$ \begin{pmatrix}
% 1-2t& 1+t\\
% 1+t &-1+t\end{pmatrix}\ \mbox{is $\R^2_+$-copositive}.$$
% In particular, we would get $\frac{1}{2}\geq t\geq 1.$ This contradiction shows that $\Upsilon$ is nonconvex.
}\end{example}

We will now show in the following sections that convexifiability property can be satisfied for several important and challenging quadratic programs under mild assumptions including the quadratic programs with mixed integer variables
and robust mixed integer quadratic programs under objective data uncertainty.
\section{ Hidden Convexifiability of Discrete Quadratic Programs}
Consider the following quadratic optimization problems with mixed integer variables:
\begin{eqnarray*}
(P_M) & \inf_{x \in \mathbb{R}^n} & x^TAx+b^Tx+c \\
& \mbox{  s.t.  }& a_j^Tx=b_j, \, j=1,\ldots,m, \\
& & x_i \in \{0,1\}, \, i \in B, \\
& & x \ge 0,
\end{eqnarray*}
where $B=\{1,\ldots,s\}$ with $s \le n$. Throughout this section, we always assume that the feasible set of $(P_M)$ is nonempty. The quadratic optimization problems with mixed integer variables is a broad and difficult class of
quadratic optimization problem which includes several well-known NP-hard problems such as the knapsack problems.
\medskip

\noindent{\bf Hidden Convexifiability of $(P_M)$}. We say that the discrete problem $(P_M)$ admits hidden convexifiability
whenever its equivalent continuous quadratic program reformulation is convexifiable,

In the celebrated paper of \cite{Bu09}, copositive representation and exact completely positive relaxation results were presented for quadratic optimization problems with mixed integer variables $(P_M)$
 under the following key regularity assumption
 \[
(RA) \ \ \ a_j^Td=b_j, j=1,\ldots,m, \, d \in \mathbb{R}^n_+ \  \Rightarrow \ 0 \le d_i \le 1, \, i \in B.
\]
It was demonstrated in \cite{Bu09} that the regularity assumption (RA) can always be satisfied by introducing slack variables. In this section, we establish that this class of mixed integer programs under the same regularity
condition assumed in \cite{Bu09} admits hidden convexifiability and consequently enjoys the zero duality gap property.

We first note that the problem $(P_M)$  can be equivalently rewritten as
\begin{eqnarray*}
 & \min_{x \in \mathbb{R}^n} & x^TAx+b^Tx+c \\
& \mbox{  s.t.  }& a_j^Tx-b_j = 0, \, j=1,\ldots,m, \\
%& & -a_j^Tx+b_j \le 0, \, j=1,\ldots,m, \\
& & (a_j^Tx)^2-b_j^2 = 0, \, j=1,\ldots,m, \\
%& & -(a_j^Tx)^2+b_j^2 \le 0, \, j=1,\ldots,m, \\
& & x_i(x_i-1) = 0, \, i \in B \\
%& & -x_i(x_i-1) \le 0, \, i \in B \\
& & x\ge 0,
\end{eqnarray*}
which can be further rewritten as the following quadratic programming problems with quadratic inequality constraints:
\begin{eqnarray*}
(P_D) & \min_{x \in \mathbb{R}^n} & x^TAx+b^Tx+c \\
& \mbox{  s.t.  }& a_j^Tx-b_j \le 0, \, j=1,\ldots,m, \\
& & -a_j^Tx+b_j \le 0, \, j=1,\ldots,m, \\
& & (a_j^Tx)^2-b_j^2 \le 0, \, j=1,\ldots,m, \\
& & -(a_j^Tx)^2+b_j^2 \le 0, \, j=1,\ldots,m, \\
& & x_i(x_i-1) \le 0, \, i \in B \\
& & -x_i(x_i-1) \le 0, \, i \in B \\
& & x\ge 0.
\end{eqnarray*}
Let $f(x)=x^TAx+b^Tx+c$ and
\[
g_j(x)= \left\{ \begin{array}{ll}
        a_j^Tx-b_j, &  j=1,...,m, \\
        -a_{j-m}^Tx+b_{j-m}, & j=m+1,\ldots,2m, \\
        (a_{j-2m}^Tx)^2-b_{j-2m}^2, &  j=2m+1,...,3m, \\
        -(a_{j-3m}^Tx)^2+b_{j-3m}^2, &  j=3m+1,...,4m, \\
        x_{j-4m}(x_{j-4m}-1), & j=4m+1,...4m+s, \\
        -x_{j-4m-s}(x_{j-4m-s}-1), & j=4m+s+1,...4m+2s.
\end{array} \right.
 \]

 Define a dual problem associated with $(P_M)$ as follows
\begin{equation} (D_M) \quad \sup \limits_{u} \Theta(u)\quad \mbox{s.t.}\ \,  u\in \R^{m+1}_+,\end{equation}
where  $\Theta(u)$ is given by $\Theta(u):=\inf\limits_{x\in \R^n_+}L(x,u)$ with $L(x,u):=f(x)+\sum\limits_{j=1}^{4m+2s} u_jg_j(x)$, $f(x)=x^TAx+b^Tx+c,$  and
\[
g_j(x)= \left\{ \begin{array}{ll}
        a_j^Tx-b_j, &  j=1,...,m, \\
        -a_{j-m}^Tx+b_{j-m}, & j=m+1,\ldots,2m, \\
        (a_{j-2m}^Tx)^2-b_{j-2m}^2, &  j=2m+1,...,3m, \\
        -(a_{j-3m}^Tx)^2+b_{j-3m}^2, &  j=3m+1,...,4m, \\
        x_{j-4m}(x_{j-4m}-1), & j=4m+1,...4m+s, \\
        -x_{j-4m-s}(x_{j-4m-s}-1), & j=4m+s+1,...4m+2s.
\end{array} \right.
 \]
The problem $(D_M)$ is indeed the semi-Lagrange dual of the equivalent reformulated problem $(P_D)$.

%As an immediate corollary, we now examine zero duality gap property between $(P_M)$ and the dual problem $(D_M)$.

%
%  We note that, under the following key regularity assumption
%  \[
% (RA) \ \ \ a_j^Td=b_j, j=1,\ldots,m, \, d \in \mathbb{R}^n_+ \  \Rightarrow \ 0 \le d_i \le 1, \, i \in B,
% \]
% Burer \cite{Bu09} showed that the complete positive relaxation is exact, in the sense that $\inf(P_M)=\inf(CP_D)$ provided the key regularity assumption (RA) holds. It was also

We now show that the problem $(P_M)$ admits hidden convexifiability under suitable conditions and consequently zero duality holds for $(P_M)$.  Our method of proof, in part, employs  the proof techniques utilized in Burer's paper \cite{Bu09}.
% As a consequence, we derive exact copositive relaxation for $(P_M)$ in the sense that $\inf(P_M)=\inf(CP_D)=\sup(CP_D^*)$ by assuming condition (RA) and an additional mild assumption.

\begin{theorem}{\bf (Hidden convexfiability and zero duality gaps)} \label{thm:mixed_integer}
For problem $(P_M)$ and its equivalent reformulation $(P_D)$, let $$\mathcal{A}_{P_D}:=\big\{\big(g_1(x),..., g_{4m+2s}(x), f(x)\big) : x\in\R^n_+\big\}+ \mathbb{R}^{4m+2s+1}_{+}.$$ Suppose that the regularity assumption (RA) holds and
$$\{d \in \mathbb{R}^n_+: d^TAd \le 0, a_j^Td=0, j=1,\ldots,m, d_i=0, 1 \le i \le s\}=\{0_{\R^n}\}.$$ Then,
\[
\big(\{0_{{\mathbb{R}^{4m+2s}}}\} \times \mathbb{R}\big) \cap {\rm cl \, conv} \mathcal{A}_{P_D}= \big(\{0_{{\mathbb{R}^{4m+2s}}}\} \times \mathbb{R}\big) \cap  \mathcal{A}_{P_D},
\]
and problem $(P_M)$ admits hidden convexifiability. Moreover, $\inf(P_M)=\sup(D_M).$

%\noindent (ii) In particular, if (RA) holds and $$\{d \in \mathbb{R}^n_+: d^TAd \le 0, a_j^Td=0, j=1,\ldots,m, d_i=0, 1 \le i \le s\}=\{0_{\R^n}\}$$ then, $\inf(P_M)=\sup(D_M).$
%$\big(0_{\mathbb{R}^{2m+s}}, \inf(P_D)-\epsilon\big) \notin {\rm  cl \, conv} (\mathcal{A}_{P_D})$.
%\begin{itemize}
%\item[{\rm (i)}] $\big(0_{\mathbb{R}^{2m+s}}, \inf(P_D)-\epsilon\big) \notin {\rm   conv} (\mathcal{A}_{P_D})$ for each $\epsilon>0$, and $\inf(P_D)=\inf(CP_D)$;
%\item[{\rm (ii)}] Suppose, in addition, that  In particular, one has  $$\inf(P_D)=\inf(CP_D)=\sup(CP_D^*).$$
%\end{itemize}
\end{theorem}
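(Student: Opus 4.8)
The plan is to prove the nontrivial inclusion $\big(\{0_{\mathbb{R}^{4m+2s}}\}\times\mathbb{R}\big)\cap{\rm cl\,conv}\,\mathcal{A}_{P_D}\subseteq\big(\{0_{\mathbb{R}^{4m+2s}}\}\times\mathbb{R}\big)\cap\mathcal{A}_{P_D}$ (the reverse inclusion is automatic since $\mathcal{A}_{P_D}\subseteq{\rm cl\,conv}\,\mathcal{A}_{P_D}$), by showing that every point $(0_{\mathbb{R}^{4m+2s}},r)$ of the closed convex hull gives rise to a point $\bar x$ that is feasible for $(P_M)$ and satisfies $f(\bar x)\le r$, whence $(0_{\mathbb{R}^{4m+2s}},r)\in\mathcal{A}_{P_D}$. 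The guiding observation is that, under the lifting $X:=xx^T$, every constraint function and the objective becomes affine in $(x,X)$: the linear constraints read $a_j^Tx-b_j$, the quadratic equality constraints read $a_j^TXa_j-b_j^2$, the binary constraints read $X_{ii}-x_i$, and $f$ reads $\tr(AX)+b^Tx+c$. Following Burer's technique, I would encode membership in ${\rm cl\,conv}\,\mathcal{A}_{P_D}$ through the completely positive cone: a point of the closed convex hull corresponds to a completely positive matrix $M=\begin{pmatrix}1 & x^T\\ x & X\end{pmatrix}=\sum_{k=1}^N\zeta_k\zeta_k^T$, with $\zeta_k=(\tau_k,w_k)\ge 0$ and $\sum_k\tau_k^2=1$, for which all the homogenized constraint functionals are $\le 0$ and the homogenized objective functional is $\le r$.

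Granting this representation, the $\pm$ constraint pairs force $a_j^Tx=b_j$, $a_j^TXa_j=b_j^2$, and $X_{ii}=x_i$ for $i\in B$. The decisive step is a Cauchy--Schwarz rank reduction: writing $s_k:=a_j^Tw_k$, the three identities $\sum_k s_k^2=a_j^TXa_j=b_j^2$, $\sum_k\tau_k s_k=a_j^Tx=b_j$, and $\sum_k\tau_k^2=1$ attain the Cauchy--Schwarz bound with equality, which forces $s_k=b_j\tau_k$ for every $k$. Hence for each $k$ with $\tau_k>0$ the point $y_k:=w_k/\tau_k\in\mathbb{R}^n_+$ satisfies $a_j^Ty_k=b_j$ for all $j$, while $a_j^Tw_k=0$ whenever $\tau_k=0$.

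For the binary block I would invoke the regularity assumption (RA) twice. When $\tau_k>0$, $y_k\in\mathbb{R}^n_+$ satisfies the equalities $a_j^Ty_k=b_j$, so (RA) yields $0\le(y_k)_i\le 1$, equivalently $(w_k)_i^2\le\tau_k(w_k)_i$; when $\tau_k=0$, applying (RA) along the ray $x_0+tw_k$ through a feasible point $x_0$ of $(P_M)$ and letting $t\to\infty$ forces $(w_k)_i=0$ for $i\in B$. Summing over $k$ and comparing with $\sum_k(w_k)_i^2=X_{ii}=x_i=\sum_k\tau_k(w_k)_i$ then forces termwise equality among the terms with $\tau_k>0$, so $(y_k)_i\in\{0,1\}$ and each such $y_k$ is feasible for $(P_M)$. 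Distributing the objective through the decomposition gives $\sum_{\tau_k>0}\tau_k^2 f(y_k)+\sum_{\tau_k=0}w_k^TAw_k\le r$ with $\sum_{\tau_k>0}\tau_k^2=1$, and here the second hypothesis does exactly the work needed on the recession terms: every $w_k$ with $\tau_k=0$ satisfies $w_k\in\mathbb{R}^n_+$, $a_j^Tw_k=0$, and $(w_k)_i=0$ for $1\le i\le s$, so the assumption $\{d\in\mathbb{R}^n_+:d^TAd\le0,\ a_j^Td=0,\ d_i=0\ (1\le i\le s)\}=\{0_{\mathbb{R}^n}\}$ forces $w_k^TAw_k\ge 0$ for each such $k$. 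Consequently $\sum_{\tau_k>0}\tau_k^2 f(y_k)\le r$ is a genuine convex combination, so some feasible $y_k=:\bar x$ satisfies $f(\bar x)\le r$, establishing $(0_{\mathbb{R}^{4m+2s}},r)\in\mathcal{A}_{P_D}$ and hence the convexifiability of $(P_D)$. Since $(P_D)$ is equivalent to $(P_M)$ and $(D_M)$ is precisely the semi-Lagrangian dual of $(P_D)$, applying Theorem~\ref{lemma2.1} to $(P_D)$ gives $\inf(P_M)=\inf(P_D)=\sup(D_M)$.

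The step I expect to be the main obstacle is the completely positive representation of points of ${\rm cl\,conv}\,\mathcal{A}_{P_D}$ asserted in the first paragraph. One must verify that passage to the closure creates no point outside the affine image of $\{M\in\mathcal{C}^*:M_{00}=1\}+\mathbb{R}^{4m+2s+1}_+$, i.e. that sequences of convex combinations whose generating points escape to infinity are captured exactly by the degenerate terms $\tau_k=0$, and that a Carathéodory-type argument for the completely positive cone permits working with a finite decomposition. Once this representation is secured, the remaining ingredients---Cauchy--Schwarz, (RA), and the recession hypothesis---form the clean algebraic heart of the argument.
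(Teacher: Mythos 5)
Your proposal follows the same route as the paper's proof (Burer-style completely positive lifting, the Cauchy--Schwarz equality argument, the two uses of (RA), and the recession hypothesis to handle degenerate terms), and the algebraic core from ``Granting this representation'' onward is correct --- indeed you are more explicit than the paper about why the terms $w_k^TAw_k$ with $\tau_k=0$ may be discarded from the objective inequality. However, the step you defer is a genuine gap, not a routine verification: you never prove that points of ${\rm cl\,conv}\,\mathcal{A}_{P_D}$ admit the completely positive representation, and this is exactly where the paper does its hardest work. The paper's device is to introduce the set
\[
K=\big\{\big({\rm Tr}(H_1X),\ldots,{\rm Tr}(H_{4m+2s}X),{\rm Tr}(HX)\big): X\in\mathcal{C},\ X_{11}=1\big\}+\mathbb{R}^{4m+2s+1}_+,
\]
where $H,H_j$ are the homogenized data matrices and $\mathcal{C}$ is the completely positive cone. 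Then $\mathcal{A}_{P_D}\subseteq K$ and $K$ is convex by construction, so the whole representation problem reduces to proving that $K$ is \emph{closed}; once that is done, ${\rm cl\,conv}\,\mathcal{A}_{P_D}\subseteq {\rm cl}\,K=K$ follows at once and a point $(0_{\mathbb{R}^{4m+2s}},\mu)$ of the closed convex hull is realized by a single matrix $X\in\mathcal{C}$ with $X_{11}=1$ satisfying the lifted inequalities.

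The closedness of $K$ is itself a normalization/recession argument, and it is the first (and independent) place where the hypothesis $\{d\in\mathbb{R}^n_+:\ d^TAd\le 0,\ a_j^Td=0,\ d_i=0\ (1\le i\le s)\}=\{0_{\R^n}\}$ is used: if $X_k\in\mathcal{C}$ with $X_{k,11}=1$ generate a convergent sequence in $K$ while $\|X_k\|\to\infty$, then any limit $\overline{X}$ of $X_k/\|X_k\|$ lies in $\mathcal{C}\setminus\{0\}$ with $\overline{X}_{11}=0$; decomposing $\overline{X}=\sum_l d_ld_l^T$ with $d_l\in\mathbb{R}^n_+$ and passing the inequalities to the limit gives $a_j^Td_l=0$ (note that here it is the \emph{squared} constraints $(a_j^Tx)^2\le b_j^2$ in $(P_D)$ that do the work, since the linear constraints homogenize trivially once the first row of $\overline{X}$ vanishes), $(d_l)_i=0$ for $i\le s$, and $\sum_l d_l^TAd_l\le 0$, so some nonzero $d_{l_0}$ violates the hypothesis. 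Your closing paragraph gestures at precisely this phenomenon (``generating points escaping to infinity are captured by the degenerate terms'') but supplies no proof, and without it the passage from the closed convex hull to a lifted matrix is unjustified. Also, no Carath\'eodory-type argument is needed: by the paper's definition $\mathcal{C}={\rm conv}\{\tilde x\tilde x^T:\tilde x\in\R^{n+1}_+\}$, membership in $\mathcal{C}$ already means a finite decomposition $\sum_k\zeta_k\zeta_k^T$ with $\zeta_k\ge 0$. Everything after that point in your write-up matches the paper's proof and is sound.
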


\begin{proof}   We first observe that $\big(\{0_{{\mathbb{R}^{4m+2s}}}\} \times \mathbb{R}\big) \cap {\rm cl \, conv} \mathcal{A}_{P_D} \supseteq \big(\{0_{{\mathbb{R}^{4m+2s}}}\} \times \mathbb{R}\big) \cap  \mathcal{A}_{P_D}$ always holds. To see the reverse inclusion, let
$(0_{{\mathbb{R}^{4m+2s}}},\mu) \in {\rm cl \, conv} \mathcal{A}_{P_D}$.  It suffices to show that  $(0_{{\mathbb{R}^{4m+2s}}},\mu) \in \mathcal{A}_{P_D}$.
Denote
\begin{equation}
 H:=\begin{pmatrix} c& b^T/2\\
b/2& A
\end{pmatrix}, \end{equation}
and
\begin{equation}  H_j=\left\{ \begin{array}{ll}
                          \begin{pmatrix} -b_j & a_j^T/2\\
a_j/2& 0_{n \times n}
\end{pmatrix}, & j=1,...,m, \\
\begin{pmatrix} b_{j-m} & -a_{j-m}^T/2\\
-a_{j-m}/2& 0_{n \times n}
\end{pmatrix},\, & j=m+1,...,2m, \\
\begin{pmatrix} -b_{j-2m}^2 & 0^T\\
0& a_{j-2m}a_{j-2m}^T
\end{pmatrix},\, & j=2m+1,...,3m,  \\
\begin{pmatrix} b_{j-3m}^2 & 0^T\\
0& -a_{j-3m}a_{j3m}^T
\end{pmatrix},\,& j=3m,...,4m, \\
\begin{pmatrix} 0 & -e_{j-4m}^T/2\\
-e_{j-4m}/2& e_{j-4m}e_{j-4m}^T
\end{pmatrix} ,\,& j=4m+1,...,4m+s \\
\begin{pmatrix} 0 & e_{j-4m-s}^T/2\\
e_{j-4m-s}/2& -e_{j-4m-s}e_{j-4m-s}^T
\end{pmatrix} ,\, & j=4m+s+1,...,4m+2s. \end{array}\right.\end{equation}
Here $e_j$ is the unit vector whose $j$th element is one and the other elements are all zero.
% Define
% $$\overline{H}_j=\left\{\begin{array}{ll}
%                       H_j, & j=1,\ldots,m, \\
%                       -H_{j-m}, & j=m+1,\ldots,2m \\
%                       H_{j-m}, & j=2m+1,\ldots, 3m, \\
%                       H_{j-2m}, & j=3m+1,\ldots, 4m, \\
%                       H_{j-2m}, & j=4m+1,\ldots,4m+s \\
%                       -H_{j-2m-s}, & j=4m+s+1,\ldots,4m+2s
%                       \end{array}
%  \right.$$
Direct verification shows that $g_j(x)=\left(\begin{array}{c}
                                                                       1 \\
                                                                       x
                                                                      \end{array} \right)^T H_j\left(\begin{array}{c}
                                                                       1 \\
                                                                       x
                                                                      \end{array} \right)$, $j=1,\ldots,4m+2s$. Denote the trace of a $(p \times p)$ matrix $A$ by ${\rm Tr}(A)$ and
                                                                      recall that $a^TAa={\rm Tr}\big(A (aa^T\big))$ for any $a \in \mathbb{R}^p$. It follows that
\begin{eqnarray*}
\mathcal{A}_{P_D} &= & \{(z^TH_1z,\ldots,z^TH_{4m+2s}z,z^THz): z=\left(\begin{array}{c}
                                                                       1 \\
                                                                       x
                                                                      \end{array} \right), \
 x \in \mathbb{R}^{n}_+\}+  \mathbb{R}^{4m+2s+1}_{+}  \\
& = & \{({\rm Tr} (H_1X),\ldots,{\rm Tr} (H_{4m+2s}X),{\rm Tr} (HX)): X=\left(\begin{array}{c}
                                                                       1 \\
                                                                       x
                                                                      \end{array} \right)\left(\begin{array}{c}
                                                                       1 \\
                                                                       x
                                                                      \end{array} \right)^T, \,  x\in \mathbb{R}^{n}_+\}\\
                                                                      & & +  \mathbb{R}^{4m+2s+1}_{+} \\
& \subseteq  & \{({\rm Tr} (H_1X),\ldots,{\rm Tr} (H_{4m+2s}X),{\rm Tr} (HX)): X \in M \}+ \mathbb{R}^{4m+2s+1}_{+},
\end{eqnarray*}
where $M=\{X \in  \mathcal{C}: X_{11}=1\}$ and $\mathcal{C}$ is the completely positive cone, given by
$\mathcal{C}:={\rm conv}\{\tilde x \tilde x^T : \tilde x\in \R^{n+1}_+\}$.

Let
\[
K=\{({\rm Tr} (H_1X),\ldots,{\rm Tr} (H_{4m+2s}X),{\rm Tr} (HX)): X \in M \}+\mathbb{R}^{4m+2s+1}_{+}.
\]
Clearly $K$ is a convex set and $\mathcal{A}_{P_D}\subseteq K$. We claim that $K$ is closed. Granting this, we have  ${\rm cl \, conv} \, \mathcal{A}_{P_D} \subseteq K$.  %To see {\rm (i)}, we now claim that  $\big(0_{\mathbb{R}^{2m+s}}, \mu \big) \notin K$.  To see this claim, we proceed by the method of contradiction and assume  that
 So, $\big(0_{\mathbb{R}^{4m+2s}}, \mu \big) \in K$. Then, there exists $X \in M$ such that
\[
 {\rm Tr} (HX) \le \mu \mbox{ and } {\rm Tr }(H_jX) \le 0,  \ j=1,\ldots,4m+2s.
\]
As $X \in M$, one can write $X=\begin{pmatrix} 1 & w^T \\
w& W
\end{pmatrix}$. It then follows that
\begin{eqnarray}\label{eq:00}
\left \{\begin{array}{ll}
\mu \ge {\rm Tr}(HX)=c + b^Tw+{\rm Tr}(AW) & \\
0=a_j^Tw-b_j, & j=1,\ldots,m, \\
0=-b_{j-m}^2 +a_{j-m}^TWa_{j-m}, &j=m+1,\ldots,2m, \\
0=-e_{j-2m}^Tw+e_{j-2m}^TWe_{j-2m}, & j=2m+1,\ldots,2m+s.
\end{array}\right.
\end{eqnarray}
Moreover, as $X \in M \subseteq \mathcal{C}$, we can write
\[
X=\sum_{l=1}^{K} \left(\begin{array}{c}
\lambda_l \\
y_l
\end{array}
\right)\left(\begin{array}{c}
\lambda_l \\
y_l
\end{array}
\right)^T \mbox{ with } \lambda_l \ge 0, y_l \in \mathbb{R}^{n}_+.
\]
So,
\begin{equation}\label{eq:use1}
\sum_{l=1}^K \lambda_l^2 = 1, \, \sum_{l=1}^K \lambda_l y_l=w \mbox{ and } \sum_{l=1}^K y_ly_l^T =W.
\end{equation}
%Clearly, $r \ge 0$. If $r=0$, then $\xi_l=0$, $l=1,\ldots,K$ and $w=0$. The last relation in \eqref{eq:00} shows that $y_l=0$, $l=1,\ldots,K$ and hence $W=0$. This contradicts to the first relation in \eqref{eq:00}. Thus, $r>0$.
Then, for all $j=1,\ldots,m$, we have
%\[
%\sum_{l=1}^K \lambda_l^2 = \sum_{l \in K_+} \lambda_l^2=1, \
%\sum_{l=1}^K \lambda_l y_l= \sum_{l \in K_+} \lambda_l y_l=\frac{w}{\sqrt{r}},
%\]
\begin{equation}\label{eq:09}
a_j^T(\sum_{l=1}^K \lambda_l y_l)%=a_j^T(\sum_{l \in K_+} \lambda_l y_l)
=a_j^Tw = b_j.
\end{equation}
This implies that
{\small \[
\big(\sum_{l=1}^K \lambda_l (a_j^T y_l)\big)^2= b_j^2=a_j^TWa_j= a_j^T(\sum_{l=1}^K y_ly_l^T)a_j   =\sum_{l=1}^K(a_j^Ty_l)^2=(\sum_{l=1}^K \lambda_l^2) \sum_{l=1}^K(a_j^Ty_l)^2,
\]}
where the second equality is from the third relation of \eqref{eq:00}; the third and the fifth equality follows from \eqref{eq:use1}.
Therefore, the equality case of the Cauchy-Schwartz inequality implies that
\[
a_j^Ty_l=\delta_j \lambda_l, \ l=1,\ldots,K, \, j=1,\ldots,m.
\]
This together with \eqref{eq:09} and $\sum_{l=1}^K \lambda_l^2=1$ shows that
$\delta_j=b_j, \, j=1,\ldots,m$. Denote %$\lambda_l=\frac{\xi_l}{\sqrt{r}}$ and
$K_+=\{l: \lambda_l>0\} \mbox{ and } K_0=\{l: \lambda_l=0\}$.
Define $x_l=\frac{y_l}{\lambda_l}$ for all $l \in K_+$. Then, for all $j=1,\ldots,m$, we have
\begin{equation}\label{eq:a_j}
a_j^Tx_l=\frac{a_j^Ty_l}{\lambda_l}=b_j, \mbox{ for all } l \in K_+
 \mbox{ and }
a_j^Ty_l=0 \mbox{ for all }  \in K_0.
\end{equation}
From the assumption (RA) and $a_j^Ty_l=0$ for all $l \in K_0$ and $j=1,\ldots,m$, we see that $(y_l)_i=0$ for all $l \in K_0$ and for all $i \in B$ (Otherwise, there exists $l_0 \in K_0$ and $i_0 \in B$ such that $(y_{l_0})_{i_0} \neq 0$. As $y_{l_0} \in \R^n_+$,  $(y_{l_0})_{i_0} > 0$. Take any feasible point $x$ of $(P_M)$. Then, $a_j^T(x+t y_{l_0})=b_j$ for all $t \ge 0$ and $j=1,\ldots,m$. Note that $x+t y_{l_0} \in \mathbb{R}^n_+$ and $(x+t y_{l_0})_{i_0}>1$ when $t$ is large enough. This contradicts with assumption (RA)).
So,
\begin{eqnarray}\label{eq:991}
X&=&\sum_{l \in K_+} \left(\begin{array}{c}
\lambda_l \\
y_l
\end{array}
\right)\left(\begin{array}{c}
\lambda_l \\
y_l
\end{array}
\right)^T +\sum_{l \in K_0} \left(\begin{array}{c}
0 \\
y_l
\end{array}
\right)\left(\begin{array}{c}
0 \\
y_l
\end{array}
\right)^T \nonumber \\
& = & \sum_{l \in K_+} \lambda_l^2  \left(\begin{array}{c}
1 \\
x_l
\end{array}
\right)\left(\begin{array}{c}
1 \\
x_l
\end{array}
\right)^T+\sum_{l \in K_0} \left(\begin{array}{c}
0 \\
y_l
\end{array}
\right)\left(\begin{array}{c}
0 \\
y_l
\end{array} \right)^T.
\end{eqnarray}
Now we see that $(x_l)_i \in \{0,1\}$ for all $l \in K_+$ and for all $i \in B$. To see this, from the last relation of \eqref{eq:00},
$w_i=W_{ii}, \ i \in B$.
So, for all $i \in B$
\[
\sum_{l \in K_+} \lambda_l^2  (x_l)_i=w_i=W_{ii}=\sum_{l \in K_+} \lambda_l^2 \,  (x_l)_i^2+\sum_{l \in K_0} (y_l)_i^2=\sum_{l \in K_+} \lambda_l^2\,  (x_l)_i^2,
\]
where the third equality is from \eqref{eq:991} and $X=\begin{pmatrix} 1 & w^T \\
w& W
\end{pmatrix}$, and the last equality follows by the fact that $(y_l)_i=0$ for all $l \in K_0$ and for all $i \in B$. Thus,
\begin{equation}\label{eq:0_1}
\sum_{l \in K_+} \lambda_l^2  \big((x_l)_i- (x_l)_i^2\big)=0.
\end{equation}
Fix $l \in K_+$. Then, \eqref{eq:a_j} gives us that $a_j^Tx_l=b_j$, $j=1,\ldots,m$.
This together with assumption (RA) implies that $0 \le (x_l)_i \le 1$, $i \in B$.
It follows from \eqref{eq:0_1} that, for each $l \in K_+$, $(x_l)_i \in \{0,1\}$ for all $i \in B$. In particular, for all $l \in K_+$, $x_l$  are feasible for $(P_D)$.

Let $\overline{x}={\rm argmin}_{ l \in K_+}\{c+b^Tx_l + x_l^TAx_l\}$. Then, one has $\overline{x}$ is feasible for $(P_D)$, that is $g_j(\bar x) \le 0$, $j=1,\ldots,4m+2s$. Moreover,
\begin{eqnarray*}
\mu \ge {\rm Tr}(HX)= c+b^Tw+{\rm Tr}(AW) & = & (\sum_{l \in K_+} \lambda_l^2 c) + b^T(\sum_{l \in K_+} \lambda_l y_l) + {\rm Tr}\big(A (\sum_{l \in K_+}(y_ly_l^T)\big) \\ %\sum_{l \in K_+} \lambda_l^2  \left(\begin{array}{c}
%1 \\
%x_l
%\end{array}
%\right)^T\left(\begin{array}{cc}
%c-\inf(P_D)& b^T/2\\
%b/2& A
%\end{array}
%\right) \left(\begin{array}{c}
%1 \\
%x_l
%\end{array}
%\right)\\
 & = & \sum_{l \in K_+} \lambda_l^2 [c+b^Tx_l+ x_l^TAx_l] \\
%& = & c +  \sum_{l \in K_+} \lambda_l^2 \big(b^Tx_l +  x_l^TAx_l\big) \\
& \ge & c +  b^T\overline{x} +  \overline{x}^TA\overline{x}=f(\bar x).
\end{eqnarray*}
This shows that $\big(0_{\mathbb{R}^{4m+2s}}, \mu \big) \in \mathcal{A}_{P_D}$. So,  the desired inclusion $\big(\{0_{{\mathbb{R}^{4m+2s}}}\} \times \mathbb{R}\big) \cap {\rm cl \, conv} \mathcal{A}_{P_D} \subseteq \big(\{0_{{\mathbb{R}^{4m+2s}}}\} \times \mathbb{R}\big) \cap  \mathcal{A}_{P_D}$ holds.
Thus, we see that $(P_D)$ is convexfiable, that is,  \[
\big(\{0_{{\mathbb{R}^{4m+2s}}}\} \times \mathbb{R}\big) \cap {\rm cl \, conv} \mathcal{A}_{P_D}= \big(\{0_{{\mathbb{R}^{4m+2s}}}\} \times \mathbb{R}\big) \cap  \mathcal{A}_{P_D},
\]
and so, problem $(P_M)$ enjoys hidden convexifiability.

%In particular, for all  $\epsilon>0$ and for all $X \in M=\{x \in  \mathcal{C}: X_{11}={\rm Tr}(J_0X)=1\}$ with ${\rm Tr}(H_iX)=0$, $i=1,\ldots,2m+s$,
%\[
%{\rm Tr}(HX) \ge \inf(P_D)-\epsilon.
%\]
% This implies that $\inf(CP_D) \ge \inf (P_D)-\epsilon$ for all $\epsilon>0$. By letting $\epsilon \rightarrow 0$, we have $\inf(CP_D) \ge \inf (P_D)$. As the reverse inequality always holds
% by construction, we see that $\inf(P_D)=\inf(CP_D)$.
%Moreover, as ${\rm conv} \, \mathcal{A}_{P_D} \subseteq K$, we also see that
%$\big(0_{\mathbb{R}^{2m+s}}, \inf(P_D)-\epsilon\big) \notin {\rm conv} \, \mathcal{A}_{P_D}$.
%This establishes {\rm (i)}.

%[Proof of {\rm (ii)}] Suppose, in addition, that

We now justify our claim that $K$ is closed. To see this, let
$$(u_1^{(k)},\ldots,u_{4m+2s}^{(k)},r^{(k)}) \in K \ \mbox{with} \ (u_1^k,\ldots,u_{4m+2s}^k,r^k) \rightarrow (u_1,\ldots,u_{4m+2s},r).$$ Then, there exist $X_k \in M$ such that
\begin{equation}\label{eq:990}
{\rm Tr}(H_j X_k)\le u_j^{(k)}, \ j=1,\ldots,4m+2s \mbox{ and } {\rm Tr}(H X_k) \le r^{(k)}.
\end{equation}
% In particular, one has
% \[
% a_j^Tx_k-b_j=u_j^{(k)}, j=1,\ldots,m, \ (x_{k})_i\big(({x_{k}})_i-1\big)=u_{2m+i}^{(k)}, i=1,\ldots,s, \mbox{ and } x_k^TAx_k+b^Tx_k+c \le r^{(k)}.
% \]
This shows that $\{X_k\}$ is bounded. (Otherwise, by passing to subsequence if necessary, we can assume that $\|X_k\| \rightarrow \infty$. Then, by passing to subsequence, we can further
assume that $\frac{X_k}{\|X_k\|} \rightarrow \overline{X}$ with $\overline{X} \in \mathcal{C} \backslash \{0\}$ and $\overline{X}_{11}=0$. Dividing \eqref{eq:990} by $\|X_k\|$ and passing to the limit,
one has
\begin{equation}\label{eq:9991}
{\rm Tr}(H_j \overline{X}) \le 0, \ j=1,\ldots,4m+2s \mbox{ and } {\rm Tr}(H \overline{X}) \le 0.
\end{equation}
As $\overline{X} \in \mathcal{C}$, there exists $K \in \mathbb{N}$ such that
\[
\overline{X}=\sum_{l=1}^{K} \left(\begin{array}{c}
\lambda_l \\
d_l
\end{array}
\right)\left(\begin{array}{c}
\lambda_l \\
d_l
\end{array}
\right)^T \mbox{ with } \lambda_l \ge 0, d_l \in \mathbb{R}^{n}_+.
\]
Note that $\overline{X}_{11}=0$. We have
\[
\overline{X}=\left(\begin{array}{cc}
0 & 0 \\
0 & \sum_{l=1}^K d_ld_l^T
\end{array}\right) \mbox{ with }  d_l \in \mathbb{R}^{n}_+.
\]
As $\overline{X} \neq 0$, by decreasing $K$ if necessary, we can assume without loss of generality that $d_l \neq 0$ for all $l=1,\ldots,K$.
It then follows from \eqref{eq:9991} that
\[
\sum_{l=1}^K (a_j^Td_l)^2=0, j=1,\ldots,m,  \sum_{l=1}^K (d_l)_i^2=0, 1 \le i \le s, \mbox{ and } \sum_{l=1}^K d_l^TAd_l \le 0.
\]
The last relation entails that there exists $l _0 \in \{1,\ldots,K\}$ such that $d_{l_0}^TAd_{l_0} \le 0$. This together with
$a_j^Td_{l_0}=0$, $j=1,\ldots,m$ and $(d_{l_0})_i=0$, $i=1,\ldots,s$, implies that
\[
d_{l_0} \in \{d \in \mathbb{R}^n_+: d^TAd \le 0, a_j^Td=0, j=1,\ldots,m, d_i=0, 1 \le i \le s\}
\]
From our assumption, one has $d_{l_0}=0$. This contradicts $d_l \neq 0$ for all $l=1,\ldots,K$, and so, $\{X_k\}$ is bounded. Passing to subsequence, we can assume that
$X_k \rightarrow \bar X$. Letting $k \rightarrow \infty$ in \eqref{eq:990}, one has
\[
{\rm Tr}(H_j \bar X) \le u_j, \ j=1,\ldots,4m+2s \mbox{ and } {\rm Tr}(H \bar X) \le r.
\]
So, $(u_1,\ldots,u_{4m+2s},r) \in K$, and hence $K$ is closed.

Finally, applying Theorem \ref{lemma2.1} together with hidden convexifiability gives us immediately that
$\inf(P_M)=\inf(P_D)=\sup(D_M)$.

\hfill \end{proof}

\section{Application to Robust Mixed Integer QPs}
As an application of the results of the previous sections, we now consider robust mixed integer quadratic optimization problem with objective data uncertainty and establish
zero duality gap for robust mixed integer quadratic optimization problems. Robust mixed integer quadratic optimization problems under data uncertainty appear in a variety of application areas (cf. \cite{robust_book,Bersimas}). These problems are generically NP-hard \cite{Bersimas}.

As we see later, we establish that, a class of robust mixed integer quadratic optimization problem with objective data uncertainty admits hidden convexfiability, and so,
admits zero duality gap under mild assumptions.  We note that  exact completely positive relaxation results have been achieved for stochastic linear optimization problem with mixed integer constraints under  distributional data uncertainty \cite{Teo}. Here, different to \cite{Teo}, we consider \textit{deterministic quadratic optimization problem} with commonly used data uncertainty \cite{robust_book}, and we obtain a gap-free duality result under suitable conditions.
%in \cite{Burer}. Here, different to \cite{Burer}, we consider robust convex quadratic problems and our data uncertainty set allows mixed-integer variables.

%problems are generically NP-hard \cite{Adaptive}.
%In this section, we consider robust multi-stage  convex quadratic optimization problem with quadratic constraints (QCQP) with objective data uncertainty. Robust multi-stage QCQPs under data uncertainty appears in a variety of application areas (cf. \cite{robust_book,Adaptive,Bersimas}). These
%problems are generically NP-hard \cite{Adaptive}. We refer the reader to \cite{Tutorial} for a comprehensive review of recent results in robust multi-stage optimization.
%
%
%As we see in this Section, we establish that, a class of  two-stage robust quadratic optimization problems admit an exact copositive relaxation under mild assumptions.  We note that, very recently, exact copositive relaxation results have been achieved for two-staged robust linear optimization with convex and compact data uncertainty set
%in \cite{Burer}. Here, different to \cite{Burer}, we consider robust convex quadratic problems and our data uncertainty set allows mixed-integer variables.

Consider the following robust mixed integer quadratic optimization problem with objective data uncertainty
\begin{eqnarray*}
(RP) & \displaystyle \inf_{x \in \R^n} & \max_{(c,A) \in \mathcal{U} \times \mathcal{V}}\{x^TAx+c^Tx\} \\
& \mbox{  s.t.  } & a_j^Tx=b_j, j=1,\ldots,m,\\
& &  x \ge 0, \, x_i \in \{0,1\},  i \in B.
\end{eqnarray*}
where  $a_j \in \mathbb{R}^n, \ b_j \in \mathbb{R}, j=1,\ldots,m$ and $B=\{1,\ldots,s\}$ with $s \le n$. Here $\mathcal{U}$  is a commonly used compact polyhedral data uncertainty set given by
\[
\mathcal{U}=\bigg\{c_0+\sum_{l=1}^L \xi_l \, c_l: \xi=(\xi_1,\ldots,\xi_L)\in \R^L, \xi \in {\rm conv}\{\xi^{(1)},\ldots,\xi^{(q)}\}\bigg\}
\]
with $\xi^{(k)} \in \R^L$, $k=1,\ldots,q$, and $\mathcal{V}$ is the spectral norm uncertainty set
\[
\mathcal{V}=\bigg\{A_0+ V: \|V\|_{\rm spec} \le \rho\bigg\},
\]
where $\|A\|_{\rm spec}=\sqrt{\lambda_{\max}(A^TA)}$ for any symmetric $(n \times n)$ matrix $A$ and for any symmetric matrix $M$, $\lambda_{\max}(M)$ is the largest eigenvalue of $M$.
Throughout this section, we will assume that the regularity assumption (RA) holds and
\begin{equation}\label{eq:assumption2}
\{d \in \mathbb{R}^n_+: a_j^Td=0, j=1,\ldots,m, d_i=0, 1 \le i \le s\}=\{0_{\R^n}\}.
\end{equation}
Denote the feasible set of (RP) by $C$. We note that the assumption \eqref{eq:assumption2} is equivalent to the fact that the feasible set $C$  is a compact set, and so,
\begin{equation} \label{eq:M}
M:=\max\{\max_{x \in C, 1 \le k \le q}\{c_0^Tx+\sum_{l=1}^L \xi_l^{(k)} \, c_l^Tx\},0\}<+\infty.
\end{equation}

We now specify a dual problem associated with problem (RP). To do this,  define
\[
W=\left(\begin{array}{cccc}
A_0+\rho I_n & 0_{\R^n} & 0_{\R^n} & 0_{n \times (q+2)} \\
0_{\R^n}^T & 0 & 0 & 0_{\R^{q+2}}^T \\
0_{\R^n}^T & 0 & 0 & 0_{\R^{q+2}}^T \\
0_{(q+2) \times n} & 0_{\R^{q+2}} & 0_{\R^{q+1}} & 0_{(q+2) \times (q+2)}
\end{array} \right), \ \
w=\left(\begin{array}{c}
0_{\R^n} \\
1 \\
-1 \\
0_{\R^{q+2}}
\end{array}\right)
\]
\begin{equation}\label{overline_a}
\overline{a}_j=\left\{\begin{array}{lll}
\left(\begin{array}{c}
a_j \\
0 \\
0 \\
0_{\R^{q+2}}
\end{array}\right)
 & \mbox{ if } & j=1,\ldots,m \\
 \left(\begin{array}{c}
c_0+\sum_{l=1}^L \xi_l^{(j-m)} \, c_l \\
-1 \\
1 \\
e_{j-m}
\end{array}\right)
  & \mbox{ if } & j=m+1,\ldots,m+q \\
   \left(\begin{array}{c}
0_{\R^n}  \\
1 \\
0 \\
e_{q+1}
\end{array}\right)  & \mbox{ if } & j=m+q+1 \\
%\left(\begin{array}{c}
%0_{\R^n}  \\
%1 \\
%0 \\
%e_{q+2}
%\end{array}\right)  & \mbox{ if } & j=m+q+2 \\
 \left(\begin{array}{c}
0_{\R^n}  \\
1 \\
-1 \\
e_{q+2}
\end{array}\right)  & \mbox{ if } & j=m+q+2
\end{array} \right.
\end{equation}
where, $I_n$ is the $n\times n$ identity matrix, for each $j=1,\ldots,q+2$, ${e}_j \in \mathbb{R}^{q+2}$ is the vector whose $j$th element is one and the others are zero,
and
\begin{equation}\label{overline_b}
\overline{b}_j=\left\{\begin{array}{lll}
b_j & \mbox{ if } & j=1,\ldots,m, \\
0  & \mbox{ if } & j=m+1,\ldots,m+q, \\
-M & \mbox{ if } & j=m+q+1, m+q+2,
\end{array} \right. ,
\end{equation}
where the constant $M$ is given as in \eqref{eq:M}. Denote $l(m,q)=m+q+2$. We now define a dual problem associated with $(RP)$ as follows
\begin{equation}\label{DRP} (D_{RP}) \quad \sup \limits_{u} \Theta(u)\quad \mbox{s.t.}\ \,  u\in \R^{4l(m,q)+2s}_+,\end{equation}
where  $\Theta(u)$ is given by $\Theta(u):=\inf\limits_{x\in \R^{n+q+4}_+}L(x,u)$ with $L(x,u):=f(x)+\sum\limits_{j=1}^{4l(m,q)+2s} u_jg_j(x)$, $f(x)=x^TWx+w^Tx,$  and
\[
g_j(x)= \left\{ \begin{array}{ll}
        \overline{a}_j^Tx-\overline{b}_j, &  j=1,...,l(m,q), \\
        -\overline{a}_{j-l(m,q)}^Tx+\overline{b}_{j-l(m,q)}, & j=l(m,q)+1,\ldots,2l(m,q), \\
        (a_{j-2l(m,q)}^Tx)^2-b_{j-2l(m,q)}^2, &  j=2l(m,q)+1,...,3l(m,q), \\
        -(a_{j-3l(m,q)}^Tx)^2+b_{j-3l(m,q)}^2, &  j=3l(m,q)+1,...,4l(m,q), \\
        x_{j-4l(m,q)}(x_{j-4l(m,q)}-1), & j=4l(m,q)+1,...4l(m,q)+s, \\
        -x_{j-4l(m,q)-s}(x_{j-4l(m,q)-s}-1), & j=4l(m,q)+s+1,...4l(m,q)+2s.
\end{array} \right.
 \]

{Below, we establish a zero duality gap result for robust mixed integer quadratic programming problems. We achieve this by
identifying hidden convexifiablity of the robust mixed integer quadratic programming problems}. Importantly, the dual problem $(D_{RP})$ can equivalently be reformulated as a copositive programming problem (see \cite{B15}).
\begin{theorem}{\bf (Robust mixed integer QP: zero duality gaps)}
Suppose that $\{d \in \mathbb{R}^n_+: a_j^Td=0, \ j=1,\ldots,m, \ d_i=0, \ i=1,\ldots,s\}=\{0_{\R^n}\}$ and assumption (RA) holds.  Then,  $\inf(RP)=\sup(D_{RP})$.
\end{theorem}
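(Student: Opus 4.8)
The plan is to reduce $(RP)$ to a mixed integer quadratic program of the form $(P_M)$ studied in Section~3 and then invoke Theorem~\ref{thm:mixed_integer}. The reduction proceeds in two stages: first eliminate the two inner maximizations analytically, and then introduce an epigraphic variable together with splitting and slack variables so that the resulting problem matches the data $W,w,\overline a_j,\overline b_j$ defining $(D_{RP})$.

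First I would compute the worst-case objective in closed form. For the spectral-norm set $\mathcal V$, since every symmetric $V$ with $\|V\|_{\rm spec}\le\rho$ satisfies $\lambda_{\max}(V)\le\rho$, one has $x^TVx\le\rho\|x\|^2$, with equality attained at $V=\rho\,xx^T/\|x\|^2$ when $x\neq0$; hence $\max_{A\in\mathcal V}x^TAx=x^T(A_0+\rho I_n)x$. For the polyhedral set $\mathcal U$, the maximum of the linear function $c^Tx$ over a polytope is attained at a generating point, so $\max_{c\in\mathcal U}c^Tx=\max_{1\le k\le q}(c_0+\sum_{l=1}^L\xi_l^{(k)}c_l)^Tx$. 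Consequently $(RP)$ has objective $x^T(A_0+\rho I_n)x+\max_{1\le k\le q}(c_0+\sum_{l}\xi_l^{(k)}c_l)^Tx$. Passing to the epigraph with a scalar $t$ turns this into the minimization of $x^T(A_0+\rho I_n)x+t$ subject to $(c_0+\sum_l\xi_l^{(k)}c_l)^Tx\le t$ for $k=1,\ldots,q$, together with $a_j^Tx=b_j$, $x\ge0$, $x_i\in\{0,1\}$. Writing $t=t_1-t_2$ with $t_1,t_2\ge0$, introducing nonnegative slacks to turn the $q$ epigraph inequalities into equalities, and appending the two bounding equalities that cap $t_1$ (and hence, on the compact feasible set, $t_2$) by the finite constant $M$ of \eqref{eq:M}, produces exactly a problem of the form $(P_M)$ in the variable $\tilde x=(x,t_1,t_2,\mathrm{slacks})\in\R^{n+q+4}$ whose data are the $W,w,\overline a_j,\overline b_j$ displayed above and whose semi-Lagrangian dual is $(D_{RP})$. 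I would check that this reformulation is value-preserving: at any feasible $x$ the optimal $t$ equals $\max_k(\cdots)^Tx\le M$, so the bounding constraints remove no optimal solution, and the splitting $t=t_1-t_2$ leaves the optimal value unchanged.

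Next I would verify the two hypotheses of Theorem~\ref{thm:mixed_integer} for this embedded $(P_M)$. The regularity assumption $(RA)$ transfers immediately: the integer-constrained coordinates are the first $s$ components of $x$ and the first $m$ equalities are precisely $a_j^Tx=b_j$, so the original $(RA)$ already forces $0\le x_i\le1$ for $i\in B$. For the recession condition, let $d=(d_x,d_{t_1},d_{t_2},d_s)\in\R^{n+q+4}_+$ satisfy $d^TWd\le0$, $\overline a_j^Td=0$ for all $j$, and $d_i=0$ for $1\le i\le s$. The bounding equality indexed $m+q+1$ gives $d_{t_1}+(d_s)_{q+1}=0$, whence $d_{t_1}=0$. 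The first $m$ equalities give $a_j^Td_x=0$, and together with $(d_x)_i=0$ for $i\le s$ and $d_x\ge0$, assumption~\eqref{eq:assumption2} forces $d_x=0$ (the indefiniteness of $A_0+\rho I_n$ is immaterial, since \eqref{eq:assumption2} does not involve the quadratic term). The $q$ epigraph equalities then reduce to $d_{t_2}+(d_s)_k=0$, forcing $d_{t_2}=0$ and $(d_s)_k=0$ for $k=1,\ldots,q$, while the equality indexed $m+q+2$ gives $(d_s)_{q+2}=0$; combined with $(d_s)_{q+1}=0$ this yields $d=0$. Hence the recession condition holds.

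With both hypotheses verified, Theorem~\ref{thm:mixed_integer} applies to the embedded problem, giving hidden convexifiability and $\inf(\mathrm{embedded}\ P_M)=\sup(D_{RP})$. Since the reformulation is value-preserving, $\inf(RP)=\inf(\mathrm{embedded}\ P_M)$, and therefore $\inf(RP)=\sup(D_{RP})$. I expect the main obstacle to be the bookkeeping of the embedding, namely choosing the splitting, slack, and bounding variables so that the feasible sets correspond exactly and so that the recession cone of the lifted quadratic is controlled, rather than any single hard estimate; the closed-form worst-case objective and the $(RA)$ transfer are routine, and once the lifted data are in place the recession computation above is the decisive verification.
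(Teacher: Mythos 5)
Your proposal is correct and follows essentially the same route as the paper: the same closed-form elimination of the two inner maximizations, the same epigraph--splitting--slack lifting to a problem in $\R^{n+q+4}$ with data $W,w,\overline a_j,\overline b_j$, and the same verification of (RA) and the recession condition (your chain $d_{t_1}=0$, then $d_x=0$, then $d_{t_2}=(d_s)_k=0$ matches the paper's use of \eqref{eq:4.1}, \eqref{eq:4.0}, \eqref{eq:4.2}) before invoking Theorem~\ref{thm:mixed_integer}. No gaps; your explicit remark that the quadratic term plays no role in the recession check is consistent with the paper's argument.
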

\begin{proof}
Note that a linear function attains its maximum over a polytope at an extreme point of the underlying polytope and $\displaystyle \max_{\|V\|_{\rm spec} \le \rho}x^TVx =\rho \|x\|^2$ for all $x \in \R^n$.
The problem (RP) can be equivalently rewritten as
\begin{eqnarray*}
& \displaystyle \min_{x \in \R^n} & x^T(A_0+\rho I_n) x+ \max_{1 \le k \le q}\{c_0^Tx+\sum_{l=1}^L \xi_l^{(k)} \, c_l^Tx\} \\
& \mbox{  s.t.  } & a_j^Tx=b_j, j=1,\ldots,m,\\
& &  x \ge 0, \, x_i \in \{0,1\},  i \in B.
\end{eqnarray*}
which is further equivalent to
\begin{eqnarray*}
& \displaystyle \min_{(x,t) \in \R^n \times \R} & x^T(A_0+\rho I_n) x+ t  \\
& \mbox{  s.t.  } & a_j^Tx=b_j, j=1,\ldots,m,\\
& & c_0^Tx+\sum_{l=1}^L \xi_l^{(k)} \, c_l^Tx \le t, \ k=1,\ldots,q,\\
& &  x \ge 0, \, x_i \in \{0,1\},  i \in B.
\end{eqnarray*}
Now, recall that $M=\max\{\max_{x \in C, 1 \le k \le q}\{c_0^Tx+\sum_{l=1}^L \xi_l^{(k)} \, c_l^Tx\},0\}.$ Then, the problem can be further rewritten as
\begin{eqnarray*}
(AP_0) & \displaystyle \min_{(x,t) \in \R^n \times \R} & x^T(A_0+\rho I_n) x+ t  \\
& \mbox{  s.t.  } & a_j^Tx=b_j, j=1,\ldots,m,\\
& & c_0^Tx+\sum_{l=1}^L \xi_l^{(k)} \, c_l^Tx \le t, \ k=1,\ldots,q,\\
& & t \le M, \\
& &  x \ge 0, \, x_i \in \{0,1\},  i \in B.
\end{eqnarray*}

Letting $t=t_1-t_2$ with $0 \le t_1 \le M$ and $t_2 \ge 0$, and introducing a slack variable $v_k \ge 0$ for each linear inequality constraint
$c_0^Tx+\sum_{l=1}^L \xi_l^{(k)} \, c_l^Tx \le t$, $k=1,\ldots,q$, $t_1 \le M$ and $t \le M$, we see that the robust problem can be rewritten as the following quadratic optimization problem with mixed linear quadratic optimization problem:
\begin{eqnarray*}
(AP_1) & \displaystyle \min_{(x,t_1,t_2,v) \in \R^n \times \R \times \R \times \R^{q+2}} & x^T(A_0+\rho I_n) x+ t_1-t_2  \\
& \mbox{  s.t.  } & a_j^Tx=b_j, j=1,\ldots,m,\\
& & c_0^Tx+\sum_{l=1}^L \xi_l^{(k)} \, c_l^Tx+v_k -(t_1-t_2)=0, \ k=1,\ldots,q,\\
& & t_1 +v_{q+1} = M, \\
%& & t_2 +v_{q+2}=M, \\
& & t_1-t_2+v_{q+2} =M, \\
& & x \ge 0,t_1 \ge 0,t_2 \ge 0,v \ge 0, \, x_i \in \{0,1\},  i \in B.
\end{eqnarray*}
Indeed, for any feasible point $(x,t)$ of $(AP_0)$, one has $(x, \max\{t,0\}, -\min\{t,0\},v)$ is feasible for $(AP_1)$ with the same objective value, where $v_k=t-(c_0^Tx+\sum_{l=1}^L \xi_l^{(k)} \, c_l^Tx)$, $k=1,\ldots,q$, $v_{q+1}=M-\max\{t,0\}$, and %$v_{q+2}=M + \min\{t,0\}$ and
$v_{q+2}=M-t$.
On the other hand, for any feasible point $(x,t_1,t_2,v)$ for $(AP_1)$, $(x,t_1-t_2)$ is feasible for $(AP_0)$ with the same objective value. Thus, we see that $(AP_0)$ and $(AP_1)$ are equivalent and have the same optimal value.

Let $z=(x,t_1,t_2,v) \in \R^n \times \R \times \R \times \R^{q+2}$. Then, this problem can be simplified as
\begin{eqnarray*}
(AP) & \displaystyle \min_{z \in \R^{n+q+4}} & z^TWz+ w^Tz  \\
& \mbox{  s.t.  } & \overline{a}_j^Tz=\overline{b}_j, j=1,\ldots,m+q+2,\\
& & z \ge 0, \, z_i \in \{0,1\},  i \in B.
\end{eqnarray*}
where \[
W=\left(\begin{array}{cccc}
A_0+\rho I_n & 0_{\R^n} & 0_{\R^n} & 0_{n \times (q+2)} \\
0_{\R^n}^T & 0 & 0 & 0_{\R^{q+2}}^T \\
0_{\R^n}^T & 0 & 0 & 0_{\R^{q+2}}^T \\
0_{(q+2) \times n} & 0_{\R^{q+2}} & 0_{\R^{q+1}} & 0_{(q+2) \times (q+2)}
\end{array} \right), \ \
w=\left(\begin{array}{c}
0_{\R^n} \\
1 \\
-1 \\
0_{\R^{q+2}}
\end{array}\right)
\]
and $\overline{a}_j$ and $\overline{b}_j$ are given as in \eqref{overline_a} and \eqref{overline_b} respectively.

We now verify that the assumptions in Theorem \ref{thm:mixed_integer} hold for problem (AP), that is,
\[
 \overline{a}_j^Tu=\overline{b}_j, j=1,\ldots,m+q+2, \, u \in \mathbb{R}^{n+q+4}_+ \  \Rightarrow \ 0 \le u_i \le 1, \, i \in B,
\]
and
{\small $$\{u \in \mathbb{R}^{n+q+4}_+: u^TWu \le 0, \overline{a}_j^Tu=0, j=1,\ldots,m+q+2, u_i=0, 1 \le i \le s\}=\{0_{\R^{n+q+4}}\}.$$}
Indeed, take any $u$ such that
\[
\overline{a}_j^Tu=\overline{b}_j, j=1,\ldots,m+q+4.
\]
Write $u=(d,r_1,r_2,h) \in \mathbb{R}^n \times \mathbb{R} \times \R \times \R^{q+2}$ with $u \in \mathbb{R}^{n+q+4}_+$. Then, the first $m$ equalities implies that
$a_j^Td=b_j, j=1,\ldots,m$, which shows that $0 \le d_i \le 1$, $i=1,\ldots,s$, by the Assumption (RA). Note that $s \le n$, and so, $u_i=d_i$, $i=1,\ldots,s$. Thus, $0 \le u_i \le 1 $, $i=1,\ldots,s$. Moreover, let $u=(d,r_1,r_2,h) \in \mathbb{R}^n \times \mathbb{R} \times \R \times \R^{q+2}$ be such that $u \in \mathbb{R}^{n+q+4}_+$, $u^TWu \le 0, \overline{a}_j^Tu=0$, $j=1,\ldots,m+q+2$, $u_i=0, 1 \le i \le s$. In particular, we see that
\[
a_j^Td=0, j=1,\ldots,m, \ d \ge 0 \mbox{ and } d_i=0, \ i=1,\ldots,s.
\]
So, $d=0_{\R^n}$ by our assumption. Moreover, one has $r_1,r_2 \ge 0$, $h \in \R^{q+2}_+$,\,
\begin{equation}\label{eq:4.0}
- r_1+r_2+ h_{j-m}=(c_0+\sum_{l=1}^L \xi_l^{(j-m)} \, c_l)^Td - r_1+r_2+ h_{j-m} =0, \ j=m+1,\ldots,m+q
\end{equation}
\begin{equation}\label{eq:4.1}
r_1+ h_{q+1}=0,
\end{equation}
and
\begin{equation}\label{eq:4.2}
r_1-r_2+h_{q+2}=0.
\end{equation}
From \eqref{eq:4.1}, $r_1 \ge 0$ and $h \ge 0$, we see that $r_1=0$ and $h_{q+1}=0$. Then, \eqref{eq:4.0} reduces to
$r_2+ h_{j-m}=0$, $j=m+1,\ldots,m+q$, which further implies that $r_2=0$ and $h_j=0$, $j=1,\ldots,q$.
Combining these with \eqref{eq:4.2} gives us that $r_1=r_2=0$ and $h=0_{\R^{q+2}}$, and so, $u=0_{\R^{n+q+4}}$.
%Define
%{\small $H:=\begin{pmatrix} 0 & \frac{w}{2}\\
%\frac{w}{2} & W
%\end{pmatrix}, $
%$$H_j=\begin{pmatrix} -\overline{b}_j & \overline{a}_j^T/2\\
%\overline{a}_j/2& 0_{n \times n}
%\end{pmatrix},\, j=1,...,m+q+1 $$
%$$ H_j=\begin{pmatrix} -\overline{b}_{j-m-q-1}^2 & 0^T\\
%0& \overline{a}_{j-m-q-1}\overline{a}_{j-m-q-1}^T
%\end{pmatrix},\, j=m+q+2,...,2(m+q+1), $$}
%and
%$$ H_j=\begin{pmatrix} 0 & -e_{j-2(m+q+1)}^T/2\\
%-e_{j-2(m+q+1)}/2& e_{j-2(m+q+1)}e_{j-2(m+q+1)}^T
%\end{pmatrix} ,\, j=2(m+q+1)+1,...,2(m+q+1)+s,$$

Now, it follows from Theorem \ref{thm:mixed_integer}  that problem (RP) admits hidden convexifiability and  $\inf(RP)=\sup(D_{RP})$, where $(D_{RP})$ is the corresponding dual problem of $(RP)$ given as in
\eqref{DRP}.
% $$(CP_{RP}^*) \quad\quad  \sup\big\{ y_0 : Z_+(x,y_0,u) \in \mathcal{C}^\star,\, y=(y_0,u)\in \R\times \R^{2(m+q+2)+s}\big\},$$
% where
% \begin{eqnarray*}
% Z_+(x,y_0,u)&=& H+\sum\limits_{j=1}^{2(m+q+2)+s} u_jH_j-y_0J_0, \end{eqnarray*}
% and $H=\begin{pmatrix} 0 & \frac{w^T}{2}\\
% \frac{w}{2} & W
% \end{pmatrix},$ and $H_j$ are given as in \eqref{eq:H}.
Thus, the conclusion follows.  \end{proof}

\section{Appendix: Technical Conditions and Related Links }
In this section we provide sufficient conditions for (strongly) convexifiable nonnegative quadratic programs and present the close links between an exact copositive relaxation result and the zero duality gap property of nonnegative quadratic programs.
\medskip

\noindent{\large \bf Sufficient conditions for strong convexifiability}.  We first examine strong convexifiability of the following uniform nonnegative quadratic program:
 $$(P_2) \quad \begin{array}{rl}& \inf\limits_{x} \ x^TAx+b^Tx+c\\
 &  \mbox{s.t.}\ \, x\in \R^n_+,\,   \alpha_ix^TAx+b_i^Tx+c_i \leq 0,\,  i=0,1,...,m,
 \end{array}
 $$
 where   $A\in S^{n},$  $b,b_i\in \R^n,$ $c,c_i\in \R,$  $i=0,1,...,m,$ $\alpha_i\in \R,$ $i=1,...,m.$ The specific feature of $(P_2)$  is that each  Hessian matrix of the constraint function is different from the one  of the objective function  only by a multiple constant.

 The following result provides some sufficient conditions for strong convexifiability of  $(P_2)$. Interestingly, these sufficient
conditions  are expressed in terms of the original data of the problem, and can be verified efficiently.

  \begin{proposition}{\rm {\bf (Strong convexifiability of nonnegative uniform QPs).}} \label{thm41} For problem $(P_2),$
  let  $f(x)= x^TAx+b^Tx+c$ and $g_i(x)=\alpha_ix^TAx+b_i^Tx+c_i$, $i=0,1,\ldots,m$.
  Suppose that there exist $\gamma \ge 0$ and
 $\beta_i \ge 0$, $i=0,1,\ldots,m$ such that $(\gamma+ \sum_{i=0}^m \alpha_i \beta_i)  A$ is strictly copositive.
  %$\hat x\in \R^n_+$ such that $g_i(\hat x)<0$ for all $i=0, 1,...,m,$.
  Suppose further that one of the following conditions is satisfied:

  $(i)$ $A$  is a positive semidefinite matrix  having some  eigenvector $d\in \R^n_+$ corresponding to a nonzero eigenvalue, with  $(b_i-\alpha_ib)^Td=0$ for all $i=0, 1,...,m;$

  $(ii)$ $-A$ is a positive semidefinite matrix  having some  eigenvector $d\in \R^n_+$ corresponding to a nonzero eigenvalue, with  $(b_i-\alpha_ib)^Td=0$ for all $i=0, 1,...,m;$

  $(iii)$ $A$  has  eigenvectors $d\in \R^n_+$ and $\hat d\in -\R^n_+$   corresponding to a positive eigenvalue  and a negative eigenvalue of $A$, respectively, with  $(b_i-\alpha_ib)^Td=0,$ $(b_i-\alpha_ib)^T\hat d=0$ for all $i=0, 1,...,m.$

    Then, problem $(P_2)$ is strongly convexifiable; thus, the set $$\mathcal{A}_{P_2}:=\big\{\big(g_0(x), g_1(x),..., g_m(x), f(x)\big) : x\in\R^n_+\big\}+ \R^{m+2}_+$$ is closed and convex.
    %In particular, problem $(P_2)$ is convexfiable and  $$\min (P_2)=\sup (D_2).$$

 \end{proposition}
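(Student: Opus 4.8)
The plan is to verify the two ingredients of strong convexifiability—closedness and convexity of $\mathcal{A}_{P_2}$—separately, treating convexity as the substantive part and closedness by a boundedness argument in the spirit of Proposition~\ref{lm31}. For convexity I would fix $P,Q\in\mathcal{A}_{P_2}$ arising from $x,x'\in\R^n_+$ (together with nonnegative slacks) and $\lambda\in(0,1)$, and set $\bar x=\lambda x+(1-\lambda)x'\in\R^n_+$ together with $\bar q=\lambda\,x^TAx+(1-\lambda)\,(x')^TAx'$. Since every Hessian is a multiple of the single matrix $A$, the only nonlinearity is $q(x):=x^TAx$, and a direct expansion gives the ``curvature defect'' $\bar q-\bar x^TA\bar x=\lambda(1-\lambda)(x-x')^TA(x-x')=:\delta$. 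Writing out the averaged coordinates, and using that the averaged slacks are nonnegative and only help, it suffices to produce $z\in\R^n_+$ with $\alpha_i\,(q(z)-\bar q)+b_i^T(z-\bar x)\le 0$ for $i=0,\ldots,m$ and $(q(z)-\bar q)+b^T(z-\bar x)\le 0$.

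The idea is to search along the eigendirection provided by (i)--(iii). Taking $z=\bar x+\tau d$ with $\tau\ge 0$ and $d\in\R^n_+$ keeps $z\in\R^n_+$ automatically, while the orthogonality hypothesis $(b_i-\alpha_ib)^Td=0$ converts $b_i^T(z-\bar x)$ into $\tau\alpha_i(b^Td)$, so that all $m+2$ inequalities reduce to a single scalar $\rho(\tau):=(q(z)-\bar q)+\tau\,b^Td$: the $i$-th constraint becomes $\alpha_i\rho(\tau)$ and the objective constraint becomes $\rho(\tau)$. Hence it is enough to find $\tau\ge 0$ with $\rho(\tau)=0$. Since $Ad=\mu d$, one has $\rho(\tau)=\mu\|d\|^2\tau^2+(2\mu\,\bar x^Td+b^Td)\tau-\delta$, a genuine parabola (as $\mu\neq0$, $d\neq0$) with $\rho(0)=-\delta$ and leading coefficient of the sign of $\mu$. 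In case (i), $A\succeq0$ forces $\delta\ge0$ and $\mu>0$, so $\rho(0)\le0$ and $\rho(\tau)\to+\infty$; in case (ii), $A\preceq0$ forces $\delta\le0$ and $\mu<0$, so $\rho(0)\ge0$ and $\rho(\tau)\to-\infty$; either way the intermediate value theorem yields a root $\tau^*\ge0$. In case (iii) I would use the $\R^n_+$-eigenvector $d$ (eigenvalue $>0$) when $\delta\ge0$, and $z=\bar x+\tau(-\hat d)$ with $-\hat d\in\R^n_+$ (eigenvalue $<0$) when $\delta<0$, each again giving a nonnegative root. With $\rho(\tau^*)=0$ all required inequalities hold with value $0$ and $z\in\R^n_+$, so the averaged point lies in $\mathcal{A}_{P_2}$.

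For closedness I would take $(u^{(k)},r^{(k)})\in\mathcal{A}_{P_2}$ with $(u^{(k)},r^{(k)})\to(u,r)$, realized by $x_k\in\R^n_+$ obeying $g_i(x_k)\le u_i^{(k)}$ and $f(x_k)\le r^{(k)}$. Forming the nonnegative combination $\gamma f(x_k)+\sum_{i=0}^m\beta_i g_i(x_k)$ gives $(\gamma+\sum_i\alpha_i\beta_i)\,x_k^TAx_k+(\text{linear in }x_k)+(\text{const})\le\gamma r^{(k)}+\sum_i\beta_i u_i^{(k)}$, whose right-hand side is bounded. If $\|x_k\|\to\infty$, passing to a subsequence with $x_k/\|x_k\|\to d\in\R^n_+\setminus\{0\}$ and dividing by $\|x_k\|^2$ would yield $(\gamma+\sum_i\alpha_i\beta_i)\,d^TAd\le0$, contradicting the strict copositivity of $(\gamma+\sum_i\alpha_i\beta_i)A$. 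Hence $\{x_k\}$ is bounded, and a convergent subsequence $x_k\to\bar x\in\R^n_+$ gives $g_i(\bar x)\le u_i$ and $f(\bar x)\le r$ by continuity, so $(u,r)\in\mathcal{A}_{P_2}$.

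I expect the main obstacle to be the convexity step, and specifically the insight that one should drive the single scalar $\rho$ to \emph{exactly} zero—rather than trying to match $q(z)=\bar q$—so that constraints whose coefficients $\alpha_i$ have both signs are simultaneously satisfied, and then verifying that the eigenvector data in (i)--(iii) always forces the relevant parabola to cross zero for some $\tau\ge0$ while keeping $z\in\R^n_+$. By contrast, the closedness step is routine once the strict copositivity of $(\gamma+\sum_i\alpha_i\beta_i)A$ is available.
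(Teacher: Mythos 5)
Your proof is correct and follows essentially the same route as the paper: closedness via the strict-copositivity boundedness argument for $\gamma f+\sum_i\beta_i g_i$, and convexity by moving along the eigendirection(s) from the averaged point and applying the intermediate value theorem to a scalar quadratic --- your $\rho(\tau)$ is exactly the paper's $\varphi(t)$ shifted by the target value $\lambda u_{m+1}+(1-\lambda)v_{m+1}$, and your case split on the sign of $\delta$ in (iii) matches the paper's piecewise path $z+td$ ($t\ge 0$), $z+t\hat d$ ($t\le 0$). The only cosmetic difference is organizational: the paper proves convexity of an auxiliary set $\Omega$ of linear residuals $(b_i-\alpha_i b)^Tx+c_i-\alpha_i c$ together with $f(x)$, and then writes $\mathcal{A}_{P_2}=L(\Omega)+\R^{m+2}_+$ for a linear map $L$, whereas you fold that bookkeeping into the observation that all $m+2$ constraints reduce to multiples $\alpha_i\rho(\tau)$ of one scalar, so it suffices to solve $\rho(\tau)=0$.
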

 \noindent{\it Proof.}% Let  Define
 %$$\mathcal{A}_{P_2}:=\big\{\big(g_0(x), g_1(x),..., g_m(x), f(x)\big) : x\in\R^n_+\big\}+ \R^{m+2}_+.$$
 We first show that $\mathcal{A}_{P_2}$
 is closed. To see this, let $(a_0^{k},a_1^{k},\ldots,a_m^k,a_{m+1}^k) \in \mathcal{A}_{P_2}$ be such that
 $(a_0^{k},a_1^{k},\ldots,a_m^k,a_{m+1}^k)  \rightarrow (\bar a_0,\bar a_1,\ldots,\bar a_m,\bar a_{m+1})$ as $k \rightarrow \infty$. Then, there exists $x_k \in \R^n_+$
such that \begin{equation}\label{eq:997}
 g_i(x_k) \le a_i^{k}, i=0,1,\ldots,m, \mbox{ and } f(x_k) \le a_{m+1}^k.
 \end{equation}
We now show that $\{x_k\}$ is bounded. Otherwise, by passing to subsequence, we can assume that $\|x_k\| \rightarrow +\infty$ and $\frac{x_k}{\|x_k\|} \rightarrow d \in \R^n_+\backslash \{0\}$ as $k \rightarrow \infty$ . Let
$\gamma,\beta_i \ge 0$ be such that $(\gamma+ \sum_{i=0}^m \alpha_i \beta_i)  A$ is strictly copositive, and denote
\[
F(x)= \gamma f(x)+ \beta_i \sum_{i=0}^m g_i(x) =  x^T[(\gamma + \sum_{i=0}^m \alpha_i \beta_i)A]x+(\gamma b+\sum_{i=0}^m \beta_i b_i)^Tx+(\gamma c+\sum_{i=0}^m \beta_i c_i).
\]
 Then, due to the strict copositivity of $(\gamma + \sum_{i=0}^m \alpha_i \beta_i)A$,
\[
\lim_{k \rightarrow \infty}\frac{F(x_k)}{\|x_k\|^2}=d^T[(\gamma + \sum_{i=0}^m \alpha_i \beta_i)A]d>0.
\]
On the other hand,
\[
\lim_{k \rightarrow \infty}\frac{F(x_k)}{\|x_k\|^2}= \lim_{k \rightarrow \infty} \frac{\gamma f(x_k)+ \beta_i \sum_{i=0}^m g_i(x_k)}{\|x_k\|^2} \le \lim_{k \rightarrow \infty} 	\frac{\gamma a_{m+1}^k+ \beta_i \sum_{i=0}^m a_i^k}{\|x_k\|^2}=0
\]
This is impossible, and so, $\{x_k\}$ must be bounded. By passing to subsequence, we see that $x_k \rightarrow \bar x \in \R^n_+$. Passing limit in \eqref{eq:997}, we see that
\[
  g_i(\bar x) \le \bar a_i, i=0,1,\ldots,m, \mbox{ and } f(\bar x) \le \bar a_{m+1}.
\]
So, we see that $(\bar a_0,\bar a_1,\ldots,\bar a_m,\bar a_{m+1}) \in \mathcal{A}_{P_2}$. Thus, $\mathcal{A}_{P_2}$ is closed.

 We now show that  the set $\mathcal{A}_{P_2}$
 is convex,  where $f(x)=x^TAx+b^Tx+c$ and  $g_i(x)=\alpha_ix^TAx+b_i^Tx+c_i.$   To do this, let
 $$\Omega:=\big\{\big((b_0-\alpha_0b)^Tx+c_0-\alpha_0 c,...,(b_m-\alpha_m b)^Tx+c_m-\alpha_m c, x^TAx+b^Tx+c\big): x\in \R^n_+\big\}.$$
 Take any $u=(u_0,...,u_m, u_{m+1})\in \Omega,$ $v=(v_0,v_1,...,v_m,v_{m+1})\in \Omega,$ and $\lambda\in (0,1).$ Then there exist $x_u\in \R^n_+$ and $x_v\in \R^n_+$ such that
 $$ u_i= (b_i-\alpha_i b)^Tx_u+c_i-\alpha_i c,\, i=0,1,...,m,\, u_{m+1}=x_u^TAx_u+b^Tx_u+c \mbox{ and }$$
 $$ v_i= (b_i-\alpha_i b)^Tx_v+c_i-\alpha c,\, i=0,1...,m,\,  v_{m+1}=x_v^TAx_v+b^Tx_v+c.$$

{\it Case 1.} Suppose $(i)$ holds.   Let $d\in \R^n_+$ be an eigenvector corresponding to a nonzero eigenvalue of $A$ with   $(b_i-\alpha_ib)^Td=0$ for all $i=0, 1,2,...,m,$
and let $z_t:=z+td$ for $t\in \R.$  Define the function
$\varphi: \R\rightarrow \R$ by
$$\varphi(t):= z_t^TAz_t+b^Tz_t+c\  \mbox{for}\  t\in \R,$$  where $z:=\lambda x_u+(1-\lambda)x_v.$
  Obviously, $\varphi$ is a continuous function. Moreover, since $A$ is positively semidefinite and $d$ is an eigenvector of $A$ corresponding to a nonzero eigenvalue of $A$, we have
\begin{equation*} \begin{array}{rl} \varphi(0)&=z^TAz+b^Tz+c\\
 & \leq \lambda (x_u^TAx_u+b^Tx_u+c)+(1-\lambda) (x_v^TAx_v+b^Tx_v+c)\\
 &=\lambda u_{m+1}+(1-\lambda)v_{m+1},\end{array}\end{equation*}
 and $\lim\limits_{t\rightarrow+\infty}\varphi (t)=+\infty.$  By the intermediate value theorem, there exists
 $t_0\in \R_+$ such that $$z_{t_0}^TAz_{t_0}+b^Tz_{t_0}+c=\varphi(t_0)=\lambda u_{m+1}+(1-\lambda)v_{m+1}.$$
 Note that $(b_i-\alpha_ib)^Td=0$ for all $i=0, 1,2,...,m,$ $t_0\in \R_+,$  and $d\in \R^n_+.$ So  we have  that $z_{t_0}\in \R^n_+,$
 $$z_{t_0}^TAz_{t_0}+b^Tz_{t_0}+c=\lambda u_{m+1}+(1-\lambda)v_{m+1}$$
 and
 $$ (b_i-\alpha_ib)^Tz_{t_0}+c_i-\alpha_i c=\lambda u_i+(1-\lambda)v_i\,\ \mbox{for}\ i=0,1,...,m.$$
This implies $(1-\lambda)u+\lambda v\in \Omega,$ and thus $\Omega$ is convex.

{\it Case 2.} Suppose $(ii)$ holds. Then, according to Case 1, the set
{\small $$\tilde\Omega:=\big\{\big(-(b_0-\alpha_0b)^Tx-(c_0-\alpha_0 c),..,-(b_m-\alpha_m b)^Tx-(c_m-\alpha_m c), -x^TAx-b^Tx-c\big): x\in \R^n_+\big\}$$}
is convex. On the other hand,  $\Omega=-\tilde \Omega.$ Therefore,  $\Omega$ is a convex set.

{\it Case 3.} Suppose $(iii)$ holds.  Let  $d$ and $\hat d$ be two eigenvectors  with the properties given in the condition $(iii).$   Consider the function $\varphi: \R\rightarrow \R$ defined by
$$\varphi(t):= z_t^TAz_t+b^Tz_t+c\  \mbox{for}\  t\in \R,$$
 where  $$z_t:=\begin{cases} z+td\,\  \mbox{for}\  t\in \R_+\\
 z+t\hat d\,\  \mbox{for}\   t\in \R_- \end{cases}$$ and $z:=\lambda x_u+(1-\lambda)x_v.$
 We see that  $\varphi$ is  continuous, $\lim\limits_{t\rightarrow+\infty}\varphi(t)=+\infty$ and   $\lim\limits_{t\rightarrow-\infty}\varphi(t)=-\infty.$ By the intermediate value theorem, there exists   $t_0\in \R$ such that $$z_{t_0}^TAz_{t_0}+b^Tz_{t_0}+c=\varphi(t_0)=\lambda u_{m+1}+(1-\lambda)v_{m+1}.$$  So, similarly to what have been done  in Case 1, the convexity of $\Omega$ follows.

On the other hand, we have  $\mathcal{A}_{P_2}=L(\Omega)+\R^{m+2}_+,$ where
$L: \R^{m+2}\rightarrow \R^{m+2}$ is the linear mapping defined by, for all $y=(y_0,...,y_{m+1})\in \R^{m+2}$, $$L(y):=(y_0+\alpha_0y_{m+1},y_1+\alpha_1y_{m+1},...,y_m+\alpha_my_{m+1},y_{m+1}) \, .$$
Therefore, $\mathcal{A}_{P_2}$ is convex.  So, $\mathcal{A}_{P_2}$ is a closed and convex set.
%From  Theorem \ref{lemma2.1}, we obtain the desired conclusion.
$\hfill\Box$

\medskip

\noindent{\large\bf Links between duality and exact copositive relaxations}. We now present the connections between the semi-Lagrangian duality and an exact copositive relaxation. In particular, we show that our zero duality gap results immediately imply the exactness of copositive relaxations.

We first recall the copositive and completely positive relaxation of $(P_1)$.
The problem $(P_1)$ can be rewritten as follows:
$$\begin{array}{rl}    &\inf\limits_{X\in \mathcal{C}} {\rm Tr}(HX)\\
 &\ \,  \mbox{s.t.}\ \,  {\rm Tr}(H_iX)\leq 0,\,  i=0,1,...,m,\\
 &\quad\quad \  X_{1,1}=1, \, {\rm rank}(X)=1, \end{array}$$
where  $\mathcal{C}:={\rm conv}\{\tilde x \tilde x^T : \tilde x\in \R^{n+1}_+\}$ is the so-called cone of  completely positive matrices,
$$H:=\begin{pmatrix} c& b^T/2\\
b/2& A
\end{pmatrix}\ \mbox{and} \  H_i=\begin{pmatrix} c_i& b_i^T/2\\
b_i/2& A_i
\end{pmatrix},\, i=0,1,...,m.$$

We note that $\mathcal{C}$ is a full-dimensional closed convex pointed cone, and its dual is the so-called copositive cone   defined by
$$\mathcal{C}^\star:=\big\{ Q=Q^T\in \R^{n+1} \ |\  Q\ \mbox{is copositive}\big\}.$$
Recall that a symmetric matrix $Q\in S^n$ is said to be copositive (resp., strictly copositive) if  $x^TQx\geq 0$ for all $x\in \R^n_+$  (resp., $x^TQx> 0$ for all $x\in \R^n_+\backslash\{0\}$).

By removing  the rank one constraint, we get   the completely positive  relaxation of $(P_1)$:
$$(CP_1)\quad\begin{array}{rl}    &\inf\limits_{X\in \mathcal{C}} {\rm Tr}(HX)\\
 &\ \,  \mbox{s.t.}\ \,  {\rm Tr}(H_iX)\leq 0,\,  i=0,1,...,m,\\
 &\quad\quad \  {\rm Tr}(J_0X)=1,\end{array}$$
where  $J_0:=e_0e_0^T$ with $e_0=(1, 0,...,0)\in\R^{n+1}.$
The conic dual of $(CP_1)$ is called the copositive relaxation of $(P_1)$   defined  as follows:
$$(CP_1^*) \quad\quad  \sup\big\{ y_0 : Z_+(y) \in \mathcal{C}^\star,\, y=(y_0,u)\in \R\times \R^{m+1}_+\big\},$$
where $$Z_+(y):=H+\sum\limits_{i=0}^mu_iH_i-y_0J_0=\begin{pmatrix}c+\sum\limits_{i=0}^m u_ic_i-y_0 &  (b+\sum\limits_{i=0}^m u_ib_i)^T/2 \\
 (b+\sum\limits_{i=0}^m u_ib_i)/2&   A+\sum\limits_{i=0}^m u_iA_i
\end{pmatrix}.$$
We say exact copositive relaxation holds if $\inf (P_1)= \sup (CP_1^*)$ and exact completely positive relaxtion holds if $\inf (P_1)= \inf (CP_1)$.

Recently, Bomze~\cite{B15} has shown that the optimal value of $(CP_1^*)$ is equal to the optimal value of the semi-Lagrangian dual problem $(D_1)$
(see Bomze~\cite{B15}):
\begin{equation}  (D_1) \quad \sup \limits_{u} \Theta(u)\quad \mbox{s.t.}\ \,  u\in \R^{m+1}_+,\end{equation}
where  $\Theta(u)$ is given by $\Theta(u):=\inf\limits_{x\in \R^n_+}L(x,u)$ with $L(x,u):=f(x)+\sum\limits_{i=0}^mu_ig_i(x)$ and  $f(x)=x^TAx+b^Tx+c,$  $g_i(x)=x^TA_ix+b_i^Tx+c_i,$ $i=0,1,...,m.$ Clearly, by construction, \begin{equation}
\inf (P_1)\geq\inf (CP_1)\geq \sup (CP_1^*) = \sup (D_1).\end{equation}
Therefore, it is easy to see that zero duality gap between $(P_1)$ and the semi-Lagrangian dual problem $(D_1)$ implies that
\begin{equation} \inf (P_1)= \inf (CP_1)= \sup (CP_1^*) = \sup (D_1),\end{equation}
and in particular, exact copositive relaxation and exact completely positive relaxation.

\section{Conclusion and Future Work}
In this paper, we have identified that convexifiability of nonconvex QPs forms the basis for zero duality gaps between nonconvex QPs and their semi-Lagrangian duals and have provided classes of nonconvex quadratic optimization problems, admitting convexifiability and consequently zero duality gaps under suitable conditions.
In particular, we have established that convexifiablity is hidden in some mixed integer quadratic programs and robust mixed integer quadratic optimization problems, guaranteeing zero duality gap.

Our approach and results highlight the significance of convexifiability of nonconvex quadratic programs that allows identification of classes of discrete, robust and continuous quadratic programs with non-negative variables, exhibiting gap-free duals under suitable conditions. It also shows promise of extensions of zero duality gap results to quadratic optimization problems with conic constraints and to polynomial optimization problems.

An interesting future research direction is to examine zero duality gap for mixed-integer quadratic optimization problems in the face of more general uncertainty sets, such as ellipsoidal data uncertainty,  and for multi-stage robust optimization problems \cite{Tutorial} which are increasingly becoming common in modelling real-world decision-making problems of optimization in the face of data uncertainty.  These will be investigated in a forthcoming study.

{\small }

\end{document}